\def\endpf{\relax\ifmmode\expandafter\endproofmath\else
  \unskip\nobreak\hfil\penalty50\hskip.75em\hbox{}\nobreak\hfil\bull
  {\parfillskip=0pt \finalhyphendemerits=0 \bigbreak}\fi}
\def\bull{\vbox{\hrule\hbox{\vrule\kern3pt\vbox{\kern6pt}\kern3pt\vrule}\hrule}}
\newtheorem{defn}{Definition}[section]
\newtheorem{lemma}[defn]{Lemma}
\newtheorem{remark}[defn]{Remark}
\newtheorem{proposition}[defn]{Proposition}
\newtheorem{maintheorem}{Theorem}
\newcommand{\nn}{{\mathbb N}}
\newcommand{\spin}{\ifmmode{\rm Spin}\else{${\rm spin}$\ }\fi}
\newcommand{\spinc}{\ifmmode{{\rm Spin}^c}\else{${\rm spin}^c$\ }\fi}
\newcommand{\W}{\mathcal{W}}
\newcommand{\mcS}{\mathcal{S}}
\newcommand{\CP}{\mathbb{CP}}
\definecolor{Gray}{gray}{0.8}
\newenvironment{narrow}[2]{%
 \begin{list}{}{%
  \setlength{\topsep}{0pt}%
  \setlength{\leftmargin}{#1}%
  \setlength{\rightmargin}{#2}%
  \setlength{\listparindent}{\parindent}%
  \setlength{\itemindent}{\parindent}%
  \setlength{\parsep}{\parskip}%
 }%
\item[]}{\end{list}}
\newif\ifpic
\begin{document}

\title{Equivariant embeddings of rational homology balls}
\author[Brendan Owens]{Brendan Owens}
\address{School of Mathematics and Statistics \newline\indent 
University of Glasgow \newline\indent 
Glasgow, G12 8SQ, United Kingdom}
\email{brendan.owens@glasgow.ac.uk}
\date{\today}
\thanks{}

\begin{abstract}  We generalise theorems of Khodorovskiy and Park-Park-Shin, and give new topological proofs of those theorems, using embedded surfaces in the 4-ball and branched double covers.  These theorems exhibit smooth codimension-zero embeddings of certain rational homology balls bounded by lens spaces.
\end{abstract}

\maketitle

\pagestyle{myheadings}
\markboth{BRENDAN OWENS}{EQUIVARIANT EMBEDDINGS OF RATIONAL HOMOLOGY BALLS}


\section{Introduction}
\label{sec:intro}

The rational blow-down operation was introduced by Fintushel and Stern in \cite{fs} and has been a useful tool in constructing small exotic 4-manifolds; see for example \cite{park,pss,ss}.  The basic setup is that one has two 4-manifolds $C$ and $B$ with diffeomorphic boundary $Y$, and with $B$ a rational homology 4-ball.  Given a closed smooth 4-manifold $X$ containing $C$, the manifold $Z=X\setminus C\,\cup_Y B$ is a rational blow-down of $X$.  For certain favourable examples of $C$ and $B$, this operation  preserves many properties of the smooth structure of $X$, including in particular nonvanishing of Seiberg-Witten invariants.  The most important examples of triples $(C,B,Y)$ for this purpose are as follows.  Let $p>q$ be coprime natural numbers, and let $Y_{p,q}$ be the lens space $L(p^2,pq-1)$.  Let $C_{p,q}$ be the negative-definite plumbed 4-manifold bounded by $Y_{p,q}$.  The lens space $Y_{p,q}$ is known to bound a rational ball $B_{p,q}$.  

One is then interested to know if a 4-manifold $Z$ contains such a submanifold $B$ so that it may be the result of a rational blow-down.  
Khodorovskiy \cite{kho} used Kirby calculus to show that $B_{p,1}$ embeds smoothly in a regular neighbourhood $V_{-p-1}$ of any embedded sphere with self-intersection $-(p+1)$, for $p>1$.  She also showed that for odd $p$, $B_{p,1}$ embeds smoothly in a regular neighbourhood of an embedded sphere with self-intersection $-4$, and hence in $\overline{\CP^2}$.

\begin{maintheorem}[Khodorovskiy \cite{kho}]
\label{thm:Kh12}
For each $p>1$, the rational ball $B_{p,1}$ embeds smoothly in $V_{-p-1}$.
\end{maintheorem}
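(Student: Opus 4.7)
My plan is to recover this theorem using double branched covers rather than Kirby calculus, which also makes the equivariance promised in the title manifest. I would first realise $V_{-p-1}$ as the double cover $\Sigma_2(B^4, F)$ of $B^4$ branched along a properly embedded surface $F$ whose boundary in $S^3$ is the $(2,p+1)$ torus knot or link (depending on the parity of $p+1$); for $p+1$ odd one can take $F$ to be a Möbius band with $p+1$ half-twists, and for $p+1$ even a twisted annulus. A direct computation shows that $\partial \Sigma_2(B^4, F) = L(p+1,1) = \partial V_{-p-1}$, that the intersection form of the cover is $(-(p+1))$, and that the cover is simply connected, so it is diffeomorphic to $V_{-p-1}$.

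Next, I would realise $B_{p,1}$ as the double branched cover $\Sigma_2(B^4, D)$ of $B^4$ branched along a ribbon disk $D$ for the 2-bridge knot whose double branched cover in $S^3$ is $L(p^2,p-1)$. This is the standard branched-cover description of the Casson--Harer/Lisca rational ball; the verification can be done by comparing a handle diagram of $\Sigma_2(B^4, D)$ with a standard Kirby picture of $B_{p,1}$.

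The core of the proof is then to exhibit the ribbon disk $D$ as a subsurface of $F$ in $B^4$, or equivalently, to present $F$ as $D$ together with a collection of bands attached in $B^4 \setminus D$. Attaching a band to the branch surface corresponds to attaching a 4-dimensional 2-handle to the double branched cover, so a surface-level inclusion $D \subset F$ obtained by band attachments lifts to the desired codimension-zero inclusion $B_{p,1} = \Sigma_2(B^4, D) \hookrightarrow \Sigma_2(B^4, F) = V_{-p-1}$, equivariant with respect to the two covering involutions.

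The main obstacle is the explicit identification of $D$ as a subsurface of $F$: one has to produce a sequence of saddle moves in $S^3$ taking the 2-bridge knot $\partial D$ to the $(2,p+1)$ torus knot or link $\partial F$ in such a way that the associated bands, traced into $B^4$, assemble into a surface isotopic rel boundary to $F$. The ingredients here are a sufficiently explicit band presentation of the 2-bridge ribbon disk on one hand and of the torus-knot Seifert/Möbius surface on the other; once both are in hand in a common picture, matching them up should be a finite combinatorial check, after which the embedding follows formally.
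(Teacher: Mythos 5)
Your overall strategy is exactly the one the paper uses: for $q=1$ the paper's $\delta$-half surface $F^\delta_{p,1}$ is precisely the pushed-in black chessboard surface of the standard diagram of $S(p+1,1)$, i.e.\ a negatively twisted M\"obius band or annulus bounded by the $(2,p+1)$ torus knot or link, whose branched double cover is $V_{-p-1}$; the ball $B_{p,1}$ is the branched double cover of the slice surface $\Delta_{p,1}$ bounded by $K_{p,1}=S(p^2,p-1)$; and the theorem follows once $\Delta_{p,1}$ is exhibited as a sublevel surface of $F^\delta_{p,1}$. Your remark that band attachments to the branch locus lift to $2$-handle attachments on the double cover, so that a sublevel inclusion of branch surfaces yields the codimension-zero embedding of covers, is correct and is exactly how the paper passes from surfaces to $4$-manifolds.

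The gap is in the step you yourself flag as ``the main obstacle'': you never actually produce the band presentation exhibiting $D=\Delta_{p,1}$ inside $F$, and that is where the entire content of the theorem lies. ``A finite combinatorial check'' suffices for any fixed $p$, but the statement is for all $p>1$, so you need a uniform construction of the bands together with an argument, valid for every $p$, that the resulting knot-with-bands diagram represents the twisted M\"obius band/annulus rel boundary and not merely some surface with the same boundary link. The paper does this by constructing $\Delta_{p,q}$ recursively (structural induction over the Stern--Brocot tree) and, at each inductive step, using band slides and band swims to match the diagram-with-bands against the pushed-in chessboard surface; even in the special case $q=1$ some such induction on $p$, or an iterated band-swim argument of the kind used in the paper's proof of the $V_{-4}$ embedding, is required. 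Without specifying the bands and carrying out that verification, the proposal is a correct plan rather than a proof. Two secondary points: the sign of the half-twists must be fixed so that the cover is $V_{-p-1}$ rather than $V_{p+1}$, and the identification of the branched double cover of the ribbon disk with the Milnor fibre $B_{p,1}$ is itself nontrivial --- the paper establishes it via an explicit Kirby diagram for the cover together with Lisca's classification of symplectic fillings of $(L(p^2,pq-1),\bar\xi_{\mathrm{st}})$.
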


\begin{maintheorem}[Khodorovskiy \cite{kho}]
\label{thm:Kh13}
For each odd $p>1$, the rational ball $B_{p,1}$ embeds smoothly in $V_{-4}$.
For each even $p>1$, $B_{p,1}$ embeds smoothly in $B_{2,1}\#\overline{\CP^2}.$
\end{maintheorem}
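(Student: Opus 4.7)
The plan is to derive both embeddings from explicit embeddings of surfaces in $B^4$, by realising all of the four-manifolds involved as double branched covers of $B^4$.

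First, I would present $B_{p,1}$ as the double cover of $B^4$ branched over a properly embedded ribbon surface $F_p$ whose boundary in $S^3$ is the two-bridge link $K(p^2,p-1)$. For odd $p$ this boundary is a knot and $F_p$ is a slice disk; for even $p$ it is a two-component link and $F_p$ is a ribbon surface of Euler characteristic $1$. In either case the double branched cover is identified with the rational ball $B_{p,1}$.

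Next, I would present the target four-manifolds analogously. For odd $p$, the disk bundle $V_{-4}$ is realised as the double cover of $B^4$ branched over a twisted annulus $A\subset B^4$ with $\partial A = T(2,4)$, the torus link whose double branched cover in $S^3$ is $L(4,1)$. For even $p$, the manifold $B_{2,1}\#\overline{\CP^2}$ is presented as the double cover of $B^4$ branched over a surface $G$ obtained from a ribbon surface for $B_{2,1}$ by incorporating a M\"obius-band summand inside a small ball, the latter contributing the $\overline{\CP^2}$ factor.

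The main step is then to exhibit a properly embedded inclusion $F_p \hookrightarrow A$ (respectively $F_p \hookrightarrow G$) inside $B^4$. An efficient method is to describe both $F_p$ and the target surface by banded-unknot diagrams (trivial disks in $S^3$ together with saddle bands) and to show that the band structure defining $F_p$ can be isotoped to sit inside the band structure of the target. Since the involution on the cover is the deck transformation of the branching, this embedding of surfaces lifts to a smooth codimension-zero embedding of four-manifolds, equivariant with respect to the involutions, as the title of the paper suggests.

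The principal obstacle is producing the explicit surface-level embedding, particularly in the way it depends on the parity of $p$. When $p$ is even, $\partial F_p$ has two components rather than one, and the M\"obius-band-like branching surface that yields $V_{-4}$ does not have enough boundary to accommodate it; the target surface must be enlarged, and this enlargement is exactly what produces the additional $\overline{\CP^2}$ connect summand in the target manifold.
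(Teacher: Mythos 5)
Your framework coincides exactly with the paper's: present $B_{p,1}$ as the double cover of $B^4$ branched over the ribbon surface $\Delta_{p,1}$ bounded by $S(p^2,p-1)$, present $V_{-4}$ and $B_{2,1}\#\overline{\CP^2}$ as double covers branched over the twisted annulus $F_{-4}$ and the boundary sum $\Delta_{2,1}\natural P_-$, and lift a sublevel-surface (embedding-of-pairs) statement to the covers. The problem is that the entire content of the theorem lies in the step you defer: actually constructing the embedding of pairs $(B^4_{0.5},\Delta_{p,1})\hookrightarrow(B^4_1,F_{-4})$, respectively into $\Delta_{2,1}\natural P_-$. Saying that one should ``show that the band structure defining $F_p$ can be isotoped to sit inside the band structure of the target'' merely restates the claim, and you yourself flag it as the principal unresolved obstacle, so as it stands this is a proof outline, not a proof. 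For comparison, the paper carries out this step by a concrete band-calculus argument: a single diagram (\Cref{fig:Kh13}) of a link with a red band whose move yields the two-component unlink and a blue band whose move yields $K_{2,1}$ is read as the middle frame of a movie; inverting the blue band produces a knot-with-bands picture bounded by $K_{2,1}$ containing two nested twisted bands with $p$ and $p+2$ crossings (\Cref{fig:Kh13isos}), and an iterated band swim reduces these counts by two at each stage. The iteration terminates in one of the two pictures of \Cref{fig:Khfinal}, namely $F_{-4}$ when $p$ is odd and $\Delta_{2,1}\natural P_-$ when $p$ is even, and this termination pattern is exactly where the parity dichotomy in the statement comes from.

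A smaller but genuine inaccuracy: your heuristic for why the even case needs a larger target is off. Both $\partial\Delta_{p,1}$ (for even $p$) and $\partial F_{-4}=T(2,4)$ have two components, so the annulus does not fail ``for lack of boundary''; moreover a sublevel surface need not share its boundary with the ambient surface at all, since its boundary is an intermediate level set. The relevant obstruction at the surface level is orientability: for even $p$ the surface $\Delta_{p,1}$ is a disk together with a M\"{o}bius band, and a M\"{o}bius band cannot be a subsurface of the orientable annulus $F_{-4}$; this is why the method must enlarge the target by a $P_-$ summand, whose double branched cover contributes the $\overline{\CP^2}$ factor.
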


Park-Park-Shin used methods from the minimal model program for complex algebraic 3-folds to generalise  \Cref{thm:Kh12}.  For each $p,q$ they described a linear graph which is roughly speaking half of the negative-definite plumbing graph associated to $C_{p,q}$, and denoted by $Z_{p,q}$  the plumbing of disk bundles over spheres according to this graph.  This $Z_{p,q}$ is called the $\delta$-half linear chain associated to the pair $(p,q)$.

\begin{maintheorem}[Park-Park-Shin \cite{pps}]
\label{thm:pps}
For any $p>q\ge1$, the rational ball $B_{p,q}$ embeds smoothly in the $\delta$-half linear chain $Z_{p,q}$.
\end{maintheorem}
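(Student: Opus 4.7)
The strategy suggested by the abstract is to exhibit both $B_{p,q}$ and $Z_{p,q}$ as double branched covers of $B^4$ along properly embedded surfaces, and then to lift an embedding of branch surfaces to a codimension-zero embedding of the covers. I would begin by realising $B_{p,q}$ as the double cover of $B^4$ branched along a ribbon disk $F_{p,q}$ whose boundary is the 2-bridge knot with double branched cover $Y_{p,q}=L(p^2,pq-1)$. Such a ribbon disk is classical (going back to Casson--Harer) and can be described explicitly in terms of the continued fraction expansion of $p/q$ via a sequence of band moves.

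Next, I would describe the $\delta$-half linear chain $Z_{p,q}$ as the double cover of $B^4$ branched along a properly embedded surface $G_{p,q}$ built from pushed-in disks plumbed together by bands, in accordance with the half-linear plumbing graph. Each disk-bundle summand of Euler number $-a_i$ corresponds to a twisted disk in $B^4$, and each plumbing edge corresponds to a band connecting consecutive disks; this is a Montesinos-style description of a linear plumbing as a branched double cover, so one needs to check only that the numerical data (Euler numbers, framings) on both sides match.

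The crux of the proof is then to show that $F_{p,q}$ embeds, after smooth isotopy rel boundary, as a subsurface of $G_{p,q}$ inside $B^4$. Once this is in hand, the $\zz/2$-equivariant inclusion of branch loci lifts to a smooth codimension-zero embedding $B_{p,q}\hookrightarrow Z_{p,q}$ that intertwines the deck transformations, accounting for the word \emph{equivariant} in the title. The main obstacle is this surface-level embedding: it requires a careful combinatorial analysis, using the continued fraction identity relating $p/q$ to $p^2/(pq-1)$ (in particular the way the latter is built symmetrically from two copies of the former), to exhibit the ribbon disk $F_{p,q}$ inside the half-plumbing branch surface $G_{p,q}$. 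I would expect that the ribbon bands cutting $F_{p,q}$ into a disk can be arranged to run along a subset of the bands making up $G_{p,q}$, so that $F_{p,q}$ appears as a neighbourhood of a spanning subtree of the dual graph of $G_{p,q}$; verifying this compatibility case by case (or, better, by induction on the length of the continued fraction) is the technical heart of the argument.
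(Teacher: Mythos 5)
Your overall strategy is exactly the one the paper uses: realise $B_{p,q}$ as the double cover of $B^4$ branched along the Casson--Harer ribbon surface $\Delta_{p,q}$, realise $Z_{p,q}$ as the double cover branched along the pushed-in black surface $F^\delta_{p,q}$ of an alternating two-bridge diagram, embed the branch loci, and lift. But the entire mathematical content of the theorem lives in the step you defer as ``the technical heart,'' and what you propose there is both imprecise and, in its specifics, not what happens. First, the precision issue: the two branch surfaces have \emph{different} boundary links ($\partial\Delta_{p,q}=S(p^2,pq-1)$, whereas $\partial F^\delta_{p,q}$ is the two-bridge knot $S(p+q^{-1}\bmod p,\,q^{-1}\bmod p)$ whose branched cover is $\partial Z_{p,q}$), so there is no embedding ``rel boundary'' to speak of. The correct notion is that of a \emph{sublevel surface}: an embedding of pairs $(B^4_{0.5},\Delta_{p,q})\hookrightarrow(B^4_1,F^\delta_{p,q})$ with $\Delta_{p,q}=F^\delta_{p,q}\cap B^4_{0.5}$, i.e.\ the movie for $F^\delta_{p,q}$ ends with a movie for $\Delta_{p,q}$. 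Only this concentric-ball condition makes the branched covers embed codimension-zero; a mere subsurface inclusion would not.

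Second, the mechanism: $\Delta_{p,q}$ is a disk (or disk plus M\"obius band) with a \emph{single} band, namely the vertical band placed exactly halfway along the alternating diagram of $K_{p,q}$, so it is not a neighbourhood of a spanning subtree of the dual graph of the plumbing surface. What the paper actually does is build a second knot-with-bands presentation $F'_{p,q}$ of $F^\delta_{p,q}$: start from the diagram of $\Delta_{p,q}$ with its vertical band and replace all but one of the crossings on one side corresponding to edges at $v_0$ of the Tait graph by further bands. One then has to prove that $F'_{p,q}$ and $F^\delta_{p,q}$ are isotopic surfaces in $B^4$; this is done by structural induction on the Stern--Brocot tree, with the inductive step carried out by explicit band slides and swims, using the continued fraction recursion of \Cref{lem:cf}. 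Performing the extra band moves first then exhibits $\Delta_{p,q}$ as a sublevel surface. Your instinct to induct on the continued fraction expansion is right, and your identification of where the difficulty lies is accurate, but as written the proposal asserts the key embedding rather than proving it, and the ``spanning subtree'' picture would not lead you to the correct construction.
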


This recovers \Cref{thm:Kh12} since $Z_{p,1}$ is diffeomorphic to the regular neighbourhood of a sphere of self-intersection $-(p+1)$.  Note that if $p>2$ then \Cref{thm:pps} gives two different embeddings, since $B_{p,q}\cong B_{p,p-q}$ but $Z_{p,q}\not\cong Z_{p,p-q}$.

The purpose of this paper is to give relatively simple topological proofs for the Khodorovskiy and Park-Park-Shin results, which also lead to some new embeddings.  Our method is to view $Y_{p,q}$ as the double cover of $S^3$ branched along the two-bridge knot $K_{p,q}=S(p^2,pq-1)$.  The plumbing $C_{p,q}$ is the double cover of $B^4$ branched along the black surface associated to an alternating diagram of $K_{p,q}$, and the rational ball $B_{p,q}$ is the double cover of $B^4$ with branch locus $\Delta_{p,q}$ which is a slice disk (if $p$ is odd) or a disk and a M\"{o}bius band (if $p$ is even).  Using an induction argument inspired by those of Lisca \cite{lisca1,lisca2} we show that $Z_{p,q}$ is the double  cover of the 4-ball branched along a surface $F^\delta_{p,q}$ and that there is a smooth embedding of pairs
$$(B^4_{0.5},\Delta_{p,q})\hookrightarrow(B^4_1,F^\delta_{p,q})$$
where $B^4_r$ is the ball of radius $r$.  We say that $\Delta_{p,q}$ is a {\em sublevel surface} of $F^\delta_{p,q}\subset B^4$.  Taking branched double covers then gives the required embedding of the rational ball $B_{p,q}$.  The proof also yields a clearer understanding of why the $\delta$-half linear plumbing shows up in  \Cref{thm:pps}:  each $\Delta_{p,q}$ is obtained by placing a band exactly half-way along the two-bridge diagram.  \Cref{fig:75embed} illustrates this for $(p,q)=(7,5)$.

\begin{figure}[htbp]
\centering
\includegraphics[width=\textwidth]{./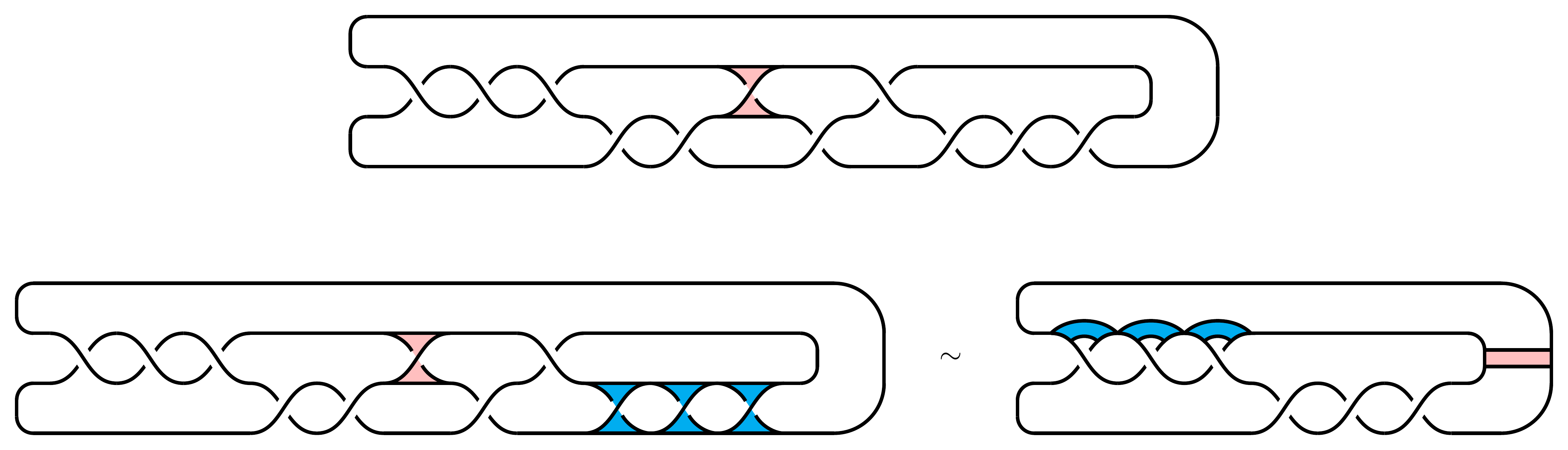}
\begin{narrow}{0.3in}{0.3in}
\caption
{{\bf The $\delta$-half embedding.} The top diagram represents the slice disk $\Delta_{7,5}$.  There are 5 crossings on either side of the pink band.  The bottom diagrams exhibit the slice disk as a sublevel surface of the $\delta$-half surface $F^\delta_{7,5}$.  The equivalence of the bottom two diagrams is explained in  \Cref{sec:proofs}.}
\label{fig:75embed}
\end{narrow}
\end{figure}

We give a proof along similar lines for  \Cref{thm:Kh13}.  The method may also be used to find further embeddings of these rational balls.  We give the following examples.

\begin{maintheorem}
\label{thm:psquared}
For each $p>1$, the slice surface $\Delta_{p^2,p-1}$ is a sublevel surface of the boundary sum $\Delta_{p,1}\natural P_{-}$,
where $P_{-}$ is an unknotted M\"{o}bius band.  Taking double branched covers yields a smooth embedding
$$B_{p^2,p-1}\hookrightarrow B_{p,1}\#\overline{\CP^2}.$$
\end{maintheorem}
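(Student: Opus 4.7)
My plan is to construct both slice surfaces explicitly in the four-ball, exhibit the sublevel embedding at the level of surfaces, and conclude by taking branched double covers as in the proofs of \Cref{thm:pps} and \Cref{thm:Kh13}. The double cover of $(B^4,P_-)$ branched along an unknotted Möbius band with one negative half-twist is $\overline{\CP^2}\setminus B^4$, so boundary-summing with $(B^4,\Delta_{p,1})$ and taking branched double covers produces $B_{p,1}\#\overline{\CP^2}$; this yields the target of the desired embedding once the sublevel assertion is proved.

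First I would write down $\Delta_{p^2,p-1}$ as the standard slice surface for the two-bridge knot $K_{p^2,p-1}=S(p^4,p^3-p^2-1)$, obtained by placing a band at the palindromic midpoint of the alternating two-bridge diagram. A short calculation with the continued fraction expansion of $p^4/(p^3-p^2-1)$ confirms that such a midpoint band exists and respects the strong involution of the diagram. Next I would depict $\Delta_{p,1}\natural P_-$ with $P_-$ realized as an unknotted, negatively-half-twisted annular collar disjoint from the standard central band of $\Delta_{p,1}$.

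The heart of the argument is an isotopy of $\Delta_{p,1}\natural P_-$ in $B^4$ under which its intersection with $B^4_{1/2}$ is precisely $\Delta_{p^2,p-1}$. Following the template of the proof of \Cref{thm:pps}, I would apply a sequence of Reidemeister moves and band slides that collect the extra twisting of the $K_{p^2,p-1}$ diagram (compared with the $K_{p,1}$ diagram) into a single negatively half-twisted annular region lying entirely in the spherical shell $B^4_1\setminus B^4_{1/2}$. Capping that annular region off on the outer sphere produces $P_-$, while the portion in $B^4_{1/2}$ simplifies to the standard $\Delta_{p^2,p-1}$ surface, and the portion of the surface near the outer sphere, after the Möbius collar is separated off, is the standard $\Delta_{p,1}$.

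The main obstacle will be verifying that the collar surface produced by this procedure is genuinely an unknotted Möbius band with the correct sign of half-twist, rather than a knotted or wrongly framed variant. I expect this to follow by tracking the framing through the two-bridge diagram, possibly combined with an induction on $p$ whose base case $p=2$ recovers the embedding $B_{4,1}\hookrightarrow B_{2,1}\#\overline{\CP^2}$ of \Cref{thm:Kh13}. Once the sublevel embedding $(B^4_{1/2},\Delta_{p^2,p-1})\hookrightarrow(B^4_1,\Delta_{p,1}\natural P_-)$ is in hand, taking branched double covers yields the required smooth embedding $B_{p^2,p-1}\hookrightarrow B_{p,1}\#\overline{\CP^2}$.
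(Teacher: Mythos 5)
Your overall framework is the correct one and matches the paper's: exhibit $\Delta_{p^2,p-1}$ as a sublevel surface of $\Delta_{p,1}\natural P_-$ and take double branched covers, using that the cover of $(B^4,P_-)$ is a punctured $\overline{\CP^2}$ so that the boundary sum covers to the connected sum with $\overline{\CP^2}$. But the proposal stops exactly where the proof has to begin. The entire content of the theorem is the diagrammatic construction you defer to ``the main obstacle'': you never produce the cobordism from $K_{p^2,p-1}$ to $K_{p,1}$, and you never verify that the total surface is $\Delta_{p,1}\natural P_-$ rather than some other surface bounded by $K_{p,1}$. ``Tracking the framing through the two-bridge diagram'' is not an argument, and your picture of collecting the extra twisting into an annular collar that is ``capped off on the outer sphere'' is not the mechanism by which the M\"{o}bius summand actually appears.

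What is needed, and what the paper supplies, is a ``middle frame'' in the sense of the proof of \Cref{thm:Kh13}: a single band on the standard diagram of $K_{p^2,p-1}$ whose band move yields $K_{p,1}$, drawn alongside the band realising $\Delta_{p^2,p-1}$ (\Cref{fig:psquared}). Performing the outward band move and recording its inverse gives a knot-with-bands diagram for a surface bounded by $K_{p,1}$ that contains $\Delta_{p^2,p-1}$ as a sublevel surface, and in which the two $(+1)$-labelled bands are nested around a twist region. The identification of this surface with $\Delta_{p,1}\natural P_-$ is then carried out by $p$ band swims, each reducing by two the number of crossings trapped between the ends of the nested bands, exactly as in the proof of \Cref{thm:Kh13}; this is the step that certifies the M\"{o}bius summand is unknotted with the correct half-twist, which you correctly identify as the crux but do not carry out. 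Your proposed induction on $p$ with base case $p=2$ could in principle replace the swim argument, but each inductive step would still require an explicit band move relating consecutive diagrams, so as written the gap remains.
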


\begin{maintheorem}
\label{thm:CP2bar}
For each $n\ge0$, the slice surface $\Delta_{F(2n+2),F(2n)}$ is a sublevel surface of the unknotted M\"{o}bius band $P_{-}$,
where $F(n)$ is the $n$th Fibonacci number.  Taking double branched covers yields a smooth embedding
$$B_{F(2n+2),F(2n)}\hookrightarrow \overline{\CP^2}.$$
\end{maintheorem}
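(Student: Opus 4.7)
The plan is to induct on $n$, showing that $\Delta_{F(2n+2),F(2n)}$ embeds as a sublevel surface of $P_-$. The 4-manifold conclusion then follows immediately from the corresponding branched cover statement, since the double cover of $B^4$ branched along the unknotted M\"{o}bius band $P_-$ is $\overline{\CP^2}$.

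For the base case we handle $n=0$ separately (where $(F(2),F(0))=(1,0)$ is degenerate and the conclusion is trivial), and for $n=1$ we exhibit $\Delta_{3,1}$ explicitly as a sublevel surface of $P_-$ by direct inspection of the two-bridge diagram for $K_{3,1}=S(9,2)$, in the style of \Cref{fig:75embed}: the distinguished band in the middle of that diagram for $\Delta_{3,1}$ is arranged to coincide with the core band of $P_-$, with the remaining crossings occupying the sublevel $B^4_{0.5}$.

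For the inductive step the key arithmetic input is the identity $F(2n+4)=3F(2n+2)-F(2n)$, equivalently the negative continued fraction expansion $F(2n+2)/F(2n)=[3,3,\ldots,3]^-$ with $n$ threes. This provides a uniform prescription for passing from the alternating two-bridge presentation of $K_{F(2n+2),F(2n)}$ to that of $K_{F(2n+4),F(2n+2)}$: one appends a single three-crossing block on each side of the distinguished central band of $\Delta$. Assuming inductively that $\Delta_{F(2n+2),F(2n)}$ already sits in $P_-$ as a sublevel surface, I would route the enlarged slice surface $\Delta_{F(2n+4),F(2n+2)}$ so that its central band agrees with the central band of the inductive embedding and the extra three-crossing blocks sit in the complementary annular portion of $P_-$ lying above the level $r=0.5$, i.e.\ in $(B^4_1\setminus B^4_{0.5},P_-\setminus\Delta_{F(2n+2),F(2n)})$.

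The main obstacle is the inductive diagrammatic step: one must produce an explicit isotopy realising $\Delta_{F(2n+4),F(2n+2)}$ with its new twist regions fitted inside the complementary region $P_-\setminus\Delta_{F(2n+2),F(2n)}$, while not disturbing the previously embedded sublevel portion. Because each continued-fraction coefficient is the same integer $3$, the building block added at every inductive step is fixed and elementary, and it is precisely this uniform recursive structure --- special to the Fibonacci family --- that permits the embedding to be realised inside $P_-$ itself rather than inside some larger surface such as the $\Delta_{p,1}\natural P_-$ of \Cref{thm:psquared}. Once this band calculus is carried out, taking double branched covers converts the resulting tower of sublevel embeddings into the claimed smooth embeddings $B_{F(2n+2),F(2n)}\hookrightarrow\overline{\CP^2}$.
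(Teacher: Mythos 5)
Your reduction of the theorem to the sublevel-surface statement, the identification of the double cover of $(B^4,P_-)$ with $\overline{\CP^2}$, and the arithmetic input $F(2n+2)/F(2n)=[3^n]$ (hence $F(2n+2)^2/(F(2n+2)F(2n)-1)=[3^{n-1},5,3^{n-1},2]$) all agree with the paper. But the inductive step, which you yourself flag as ``the main obstacle,'' is the entire content of the theorem, and it is not carried out. Worse, the nesting you propose,
$$(B^4_{0.5},\Delta_{F(2n+2),F(2n)})\subset(B^4_{0.75},\Delta_{F(2n+4),F(2n+2)})\subset(B^4_1,P_-),$$
cannot work in the form described. The intermediate piece $\Delta_{F(2n+4),F(2n+2)}\cap(B^4_{0.75}\setminus B^4_{0.5})$ would be a cobordism from $K_{F(2n+2),F(2n)}$ to $K_{F(2n+4),F(2n+2)}$; since every $\Delta_{p,q}$ has Euler characteristic $1$ (two minima, one saddle), this cobordism has Euler characteristic $0$. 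As the two links are not isotopic, the cobordism must contain saddle points, and to balance the Euler characteristic it must then also contain local minima or maxima. Your picture of ``the extra three-crossing blocks sitting in the complementary annular portion'' contains no critical points of the radial function at all, so it describes a product cobordism, which is impossible. To repair this you would have to produce, for each $n$, an explicit band move (together with a birth) converting $K_{F(2n+4),F(2n+2)}$ into $K_{F(2n+2),F(2n)}$ plus an unknot, compatibly with the two vertical bands and with the ambient copy of $P_-$ --- a claim at least as strong as the theorem itself, and nowhere justified.

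The paper avoids any such nesting of consecutive slice surfaces. For each $n$ separately it takes the standard knot-with-bands diagram of $\Delta_{F(2n+2),F(2n)}$ and adjoins a single $(-1)$-labelled band (\Cref{fig:CP2bar}); performing that band move first exhibits $\Delta_{F(2n+2),F(2n)}$ as a sublevel surface of the total surface. It then checks that the boundary of the total surface is the unknot by iterating an explicit isotopy $2n$ times, and identifies the total surface with the pushed-in unknotted M\"{o}bius band $P_-$ by the band slides of \Cref{fig:CP2barslides}. If you want to keep an inductive flavour, the induction should be run on these diagram simplifications (as the paper implicitly does), not on embeddings of one $\Delta$ inside the next.
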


It is interesting to compare \Cref{thm:CP2bar} with the results of \cite{es, hp}, from which it follows that $B_{F(2n+1),F(2n-1)}$ embeds smoothly in $\CP^2$ for each natural number $n$.

In the final section of the paper, we follow \cite{kho,pps} and show that all of the embeddings listed above are {\em simple}.  Essentially this means they are not useful for constructing interesting 4-manifolds.  Our justification for this paper, then, is that it provides useful worked examples of a simple and natural way to realise 4-manifold embeddings.  It also seems interesting  that the result of \cite{pps}, found using birational morphisms of 3-dimensional complex algebraic varieties, turns out to have an explanation on the level of two-bridge knot diagrams.

The paper contains further results of independent interest, including a careful proof that various descriptions of $B_{p,q}$ -- as a symplectic filling of the tight contact structure on $Y_{p,q}$ coming from the universal cover, as the Milnor fibre of a cyclic quotient singularity, or as the double cover of a slice disk described by Casson and Harer -- are the same up to diffeomorphism.  We also provide a new recursive description of the plumbing graphs associated to Wahl singularities, which is related to the Stern-Brocot tree structure on the rational numbers.

\vskip2mm
\noindent{\bf Acknowledgements.}  
The author thanks Paolo Aceto, Marco Golla, Yank\i\ Lekili, Paolo Lisca and Jongil Park for helpful conversations.  We also thank the referee for helpful suggestions to improve the exposition.



\section{Rational numbers and rooted binary trees}
\label{sec:trees}

In this section we describe two trees which arise in the study of Wahl singularities and rational balls bounded by lens spaces; one of them is in fact the well-known Stern-Brocot tree, used for finding efficient rational approximations for real numbers \cite{aigner,brocot,gkp,stern}.  The motivating fact is the following: cyclic quotient singularities of type $\frac{1}{p^2}(1,pq-1)$ correspond to sequences of integers $[c_1,\dots,c_k]$.  The set of all such sequences is obtained from the one-term sequence $[4]$ by the recursive rule described in \cite{wahl}:
$$[c_1,\dots,c_k]\mapsto[c_1+1,\dots,c_k,2]\mbox{ or }[2,c_1,\dots,c_k+1].$$
We will see  that there is an alternative recursive rule which may be used: 
we obtain all sequences from $[2+2]$ via
\begin{align*}
[a_1,\dots,a_{k-1},a_k+b_l,b_{l-1},\dots,b_1]&\mapsto[a_1,\dots,a_{k-1},(a_k+1)+2,b_l,b_{l-1},\dots,b_1]\\
&\mbox{ or }[a_1,\dots,a_{k-1},a_k,2+(b_l+1),b_{l-1},\dots,b_1].
\end{align*}
These two recursions give rise to different labellings of a tree by pairs $(p,q)$, which we now describe.

An (infinite, complete, rooted) binary tree is a tree with a single root and such that each node has two children.  
We will consider labellings of the nodes by pairs $(p,q)$ of coprime natural numbers with $p>q$.

For example we take the tree $\W_1$, which we call the {\em inverse Stern-Brocot tree}.  The root is labelled $(2,1)$ and subsequent nodes are labelled according to the following recursive rule:

{\centering
\includegraphics[width=7cm]{./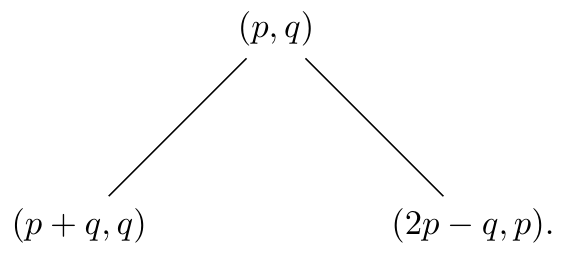}
\par}
\noindent
The first three rows of the tree, and two further nodes, are shown below.

{\centering
\includegraphics[width=8cm]{./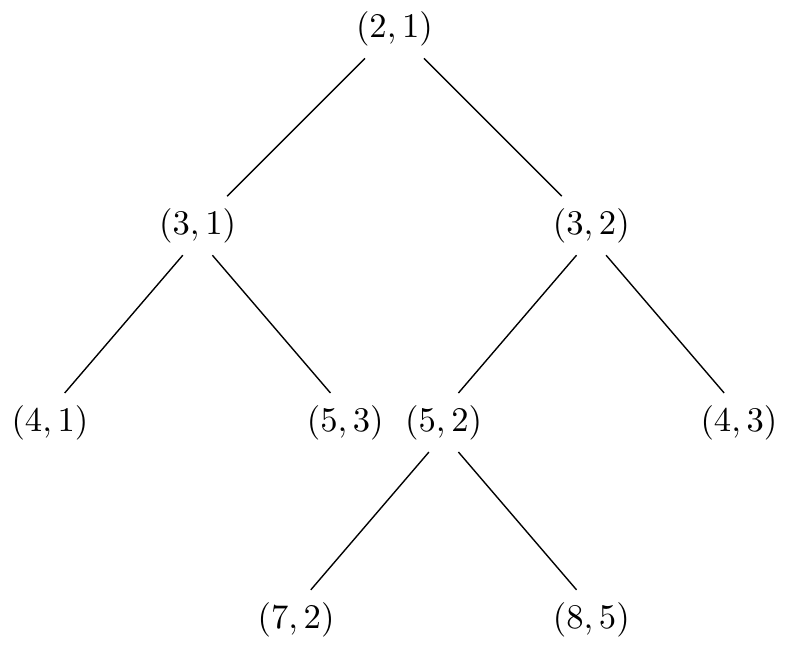}
\par}
\noindent
It is easy to see that there is one node for each coprime pair $(p,q)$ with $p>q$.

The {\em Stern-Brocot tree}, here denoted $\W_2$, has a similar description but at each point the pair $(p,q)$ from $\W_1$ is replaced with $(p,q^{-1}\mod p)$.  Thus the root is again labelled $(2,1)$ and the recursive rule is

{\centering
\includegraphics[width=7cm]{./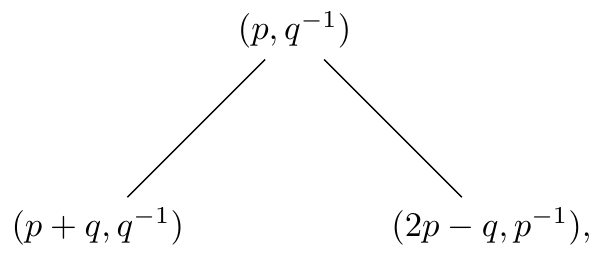}
\par}

\noindent where in each case $(m,n^{-1})$ is short hand for  $(m,n^{-1}\mod m)$.
The first few rows are as follows.

{\centering
\includegraphics[width=8cm]{./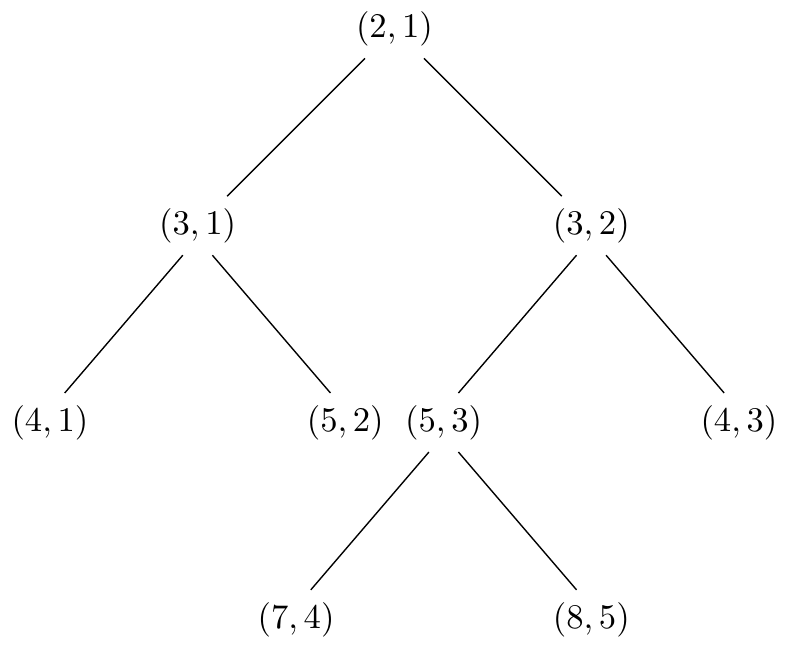}
\par}

\begin{remark}
It is not a difficult exercise to show that this is the same as the recursion rule described in \cite{aigner,gkp}.  For example one may write $lp+mq=1,$ with $m=q^{-1}\mod p$ and then use $l$ and $m$ to find expressions for various other inverses involved in the recursion rule.
\end{remark}

\begin{remark} In fact the tree $\W_2$ is a subtree of the usual Stern-Brocot tree.  The full tree has root labelled $(1,1)$, and is easily recovered from the tree described here.
\end{remark}


\section{Continued fractions and rational balls}
\label{sec:wahl}
In this section we  set our conventions for two-bridge links and lens spaces.  We also describe a family of slice surfaces $\Delta_{p,q}$; we further verify that these are the same as those described by Casson and Harer in \cite{ch}, and that the double cover of $B^4$ branched along $\Delta_{p,q}$ is the same rational ball $B_{p,q}$ referred to in each of \cite{fs,kho,lm,pps,park,sym,wahl}.

We use Hirzebruch-Jung continued fractions, with the following notation:
$$[a_1,a_2,\ldots,a_k]
:=a_1-\frac{1}{a_2-\raisebox{-3mm}{$\ddots$
\raisebox{-2mm}{${-\frac{1}{\displaystyle{a_k}}}$}}}.$$

Given a pair of coprime natural numbers $p$ and $q$ with $p>q$, let
$$p/q=[a_1,\dots,a_k],$$
with each $a_i\ge2$.
We now describe a planar graph $\Gamma_{p,q}$ with $k+1$ vertices $v_0,\dots,v_k$.  For each $1\le i< k$, this graph has a single edge joining $v_i$ to $v_{i+1}$.  For each $i>0$, there are edges between $v_0$ and $v_i$ so that the valence of $v_i$ is $a_i$.  We draw the first set of edges in a line, and the second set of edges all on the same side of this line.  An example is shown in \Cref{fig:Spq}.  We then draw a link diagram for which $\Gamma_{p,q}$ is the Tait graph; this is obtained from the unlink diagram which is the boundary of a tubular neighbourhood of the vertices by adding a right-handed crossing along each edge.  See for example \Cref{fig:Spq}.  We refer to this as the standard alternating diagram of the two-bridge link $S(p,q)$.  Note that according to this convention $S(3,1)$ is the left-handed trefoil.
We define the lens space $L(p,q)$ to be the double cover of $S^3$ branched along $S(p,q)$. 

\begin{figure}[htbp]
\centering
\includegraphics[width=\textwidth]{./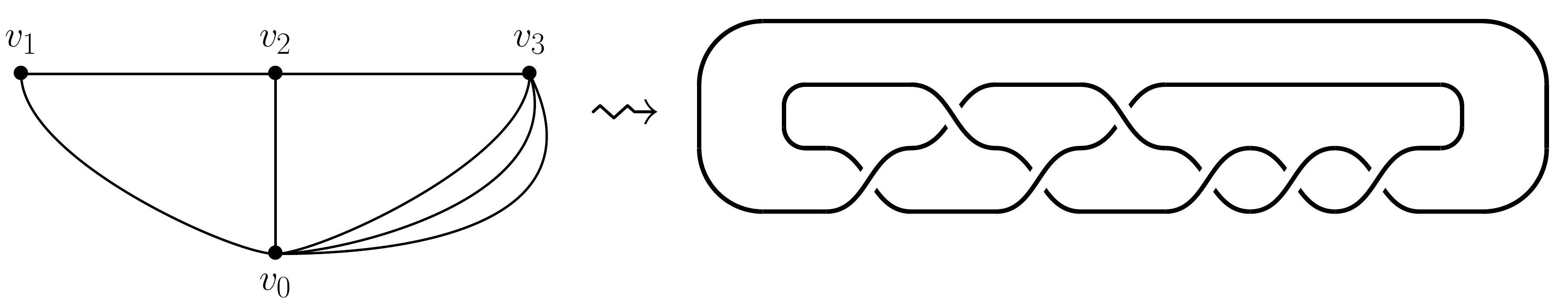}
\begin{narrow}{0.3in}{0.3in}
\caption
{{\bf The planar graph $\Gamma_{18,11}$ and the two-bridge link $S(18,11)$.} Note that $18/11=[2,3,4]$.}
\label{fig:Spq}
\end{narrow}
\end{figure}

It will be convenient to recall how to pass between the continued fractions for $p/q$ and for $p/(p-q)$.  This can be  done using the Riemenschneider point rule, as described in \cite{lisca1,pps}.  We can also use planar graphs: the planar dual of $\Gamma_{p,q}$ is $\Gamma_{p,p-q}$, with the continued fraction coefficients read from left to right for both.  An example is shown in \Cref{fig:planardual}.

\begin{figure}[htbp]
\centering
\includegraphics[width=\textwidth]{./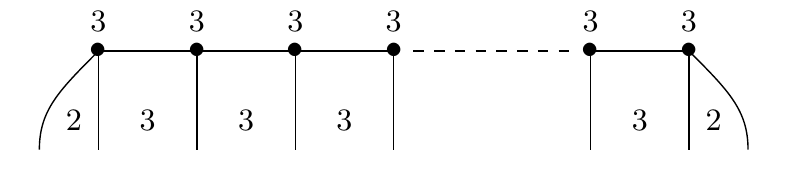}
\begin{narrow}{0.3in}{0.3in}
\caption
{{\bf Continued fractions and planar graphs.}  This example shows that if 
$p/q=[3^n]$, then $p/(p-q)=[2,3^{n-1},2]$.  We have drawn $v_0$ ``at infinity'' for convenience.}
\label{fig:planardual}
\end{narrow}
\end{figure}

The following lemma about continued fractions is presumably known to experts (cf. \cite[Lemma 8.5]{hp}, \cite[Remark 3.2]{lec}).

\begin{lemma}
\label{lem:cf}
Let $p>q$ be coprime natural numbers.  Suppose that
$$\frac{p}{q}=[a_1,\dots,a_k]$$
and
$$\frac{p}{p-q}=[b_1,\dots,b_l].$$
Then
$$\frac{p^2}{pq-1}=[a_1,\dots,a_{k-1},a_k+b_l,b_{l-1},\dots,b_1],$$
and
$$\frac{p^2}{pq+1}=[a_1,\dots,a_{k-1},a_k,2,b_l,b_{l-1},\dots,b_1],$$
\end{lemma}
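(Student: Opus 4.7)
The plan is to reduce both identities to a direct matrix computation in $SL_2(\zz)$. Set $M_a = \begin{pmatrix} a & -1 \\ 1 & 0 \end{pmatrix}$, so that $\det M_a = 1$ and a straightforward induction shows that
\[
M_{a_1}\cdots M_{a_k} = \begin{pmatrix} p & -p'\\ q & -q'\end{pmatrix},
\]
where $p'/q' = [a_1,\dots,a_{k-1}]$; in particular the Hirzebruch--Jung continued fraction $[a_1,\dots,a_k]$ is the ratio of the first-column entries. The determinant relation $qp' - pq' = 1$ gives $p' \equiv q^{-1}\pmod p$. Applying the same to $p/(p-q) = [b_1,\dots,b_l]$ produces analogous $p''$, $q''$ with $p'' \equiv (p-q)^{-1} \equiv -q^{-1}\pmod p$, and hence the crucial identity $p' + p'' = p$.

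I next record two small matrix identities. First, from $M_a^T = D M_a D$ with $D = \diag(1,-1)$, one gets the reversal formula
\[
M_{b_l}\cdots M_{b_1} \;=\; D\,(M_{b_1}\cdots M_{b_l})^T D \;=\; \begin{pmatrix} p & q-p\\ p'' & -q''\end{pmatrix}.
\]
Second, a direct check gives $M_{a+b} = -\,M_a M_0 M_b$, which lets us ``split'' the middle entry $a_k+b_l$ of the first claimed continued fraction into a product involving $M_{a_k}$ and $M_{b_l}$.

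For the first identity, the matrix product corresponding to $[a_1,\dots,a_{k-1},a_k+b_l,b_{l-1},\dots,b_1]$ is, by the splitting identity, $-\,A\,M_0\,(M_{b_l}\cdots M_{b_1})$, where $A = M_{a_1}\cdots M_{a_k}$. Multiplying out and reading off the first column, then substituting $p'+p'' = p$ and $qp'-pq' = 1$, yields $(p^2,\, pq-1)^T$. The second identity is handled analogously: one forms $A \cdot M_2 \cdot (M_{b_l}\cdots M_{b_1})$ and again simplifies the first column using the same two relations to obtain $(p^2,\, pq+1)^T$.

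The main obstacle is purely bookkeeping in the $2\times 2$ multiplications; the single piece of real content is the identification $p'+p''=p$, which is what produces the ``doubling'' $p \mapsto p^2$ in the numerator of each of the two resulting fractions.
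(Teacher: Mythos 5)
Your matrix argument is correct; I checked the key identities ($M_a^T=DM_aD$, $M_{a+b}=-M_aM_0M_b$, and the two first-column computations, which indeed give $(p^2,pq-1)^T$ and $(p^2,pq+1)^T$ once one uses $qp'-pq'=1$ and $p'+p''=p$), and the crucial step $p'+p''=p$ does follow from $p'\equiv q^{-1}$, $p''\equiv -q^{-1}\pmod p$ together with $0<p',p''<p$. However, your route is genuinely different from the paper's. The paper proves the lemma by structural induction over the inverse Stern--Brocot tree: every coprime pair is reached from $(2,1)$ by $(p,q)\mapsto(p+q,q)$ or $(p,q)\mapsto(2p-q,p)$, and the inductive step is outsourced to a continued-fraction lemma of Lisca together with the four elementary identities \eqref{eq:cf}; this choice is deliberate, since the same tree recursion is reused later to build the surfaces $\Delta_{p,q}$ and to prove \Cref{lem:wahl}, so the induction doubles as scaffolding for the rest of the paper. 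Your $SL_2(\zz)$ computation is self-contained (no citation needed), non-inductive, and as a bonus the transpose identity simultaneously reproves \Cref{lem:cfinv}, since the first column of $M_{b_l}\cdots M_{b_1}$ exhibits $[b_l,\dots,b_1]=p/p''$ with $p''\equiv(p-q)^{-1}\pmod p$. Two small points you should make explicit when writing this up: (i) because all partial quotients are at least $2$, the first column of $M_{c_1}\cdots M_{c_m}$ consists of coprime positive integers, so it is exactly the numerator and denominator in lowest terms (not merely their ratio), both when you identify $A$ and $B'$ and when you read off the conclusion; and (ii) the bound $0<p',p''<p$ (monotonicity of the numerators when all $c_i\ge 2$), which is what upgrades $p'+p''\equiv 0\pmod p$ to the exact equality $p'+p''=p$.
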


\begin{proof}
We use structural induction via the inverse Stern-Brocot tree, or in other words based on the fact that each ordered coprime pair of natural numbers may be obtained from $(2,1)$ via a finite sequence of the following steps:
$$(p,q)\mapsto(p+q,q)$$
and
$$(p,q)\mapsto(2p-q,p).$$
The statement holds for the base case and the inductive step follows easily from \cite[Lemma 9.1]{lisca1} together with the following identities:
 \begin{align}\label{eq:cf}
  \begin{aligned}
   \frac{p+q}{q}&=1+\frac{p}{q}=[a_1+1,\dots,a_k], \\       \frac{2p-q}{p}&=2-\frac{1}{p/q}=[2,a_1,\dots,a_k],
  \end{aligned}
  &&
  \begin{aligned}
   \frac{p+q}{p}&=2-\frac{1}{p/(p-q)}=[2,b_1,\dots,b_l], \\       \frac{2p-q}{p-q}&=1+\frac{p}{p-q}=[b_1+1,\dots,b_l].
  \end{aligned}
 \end{align}
\end{proof}

We recall one more well-known fact about continued fractions.  Note this gives the well-known isotopy $S(p,q)=S(p,q^{-1}\mod p)$.

\begin{lemma}
\label{lem:cfinv}
Let $p>q$ be coprime natural numbers.  If
$$\frac{p}{q}=[a_1,\dots,a_k],$$
then
$$\frac{p}{q^{-1}\mod p}=[a_k,\dots,a_1].$$
\end{lemma}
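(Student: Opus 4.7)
The plan is to encode the continued fraction as a matrix product and exploit how the product transforms under reversal of its factors. To each integer $a\ge 2$, associate
$$M(a)=\begin{pmatrix} a & -1\\ 1 & 0\end{pmatrix},$$
which has determinant one. A short induction on $k$ shows that if $p/q=[a_1,\dots,a_k]$ and $p^*/q^*=[a_1,\dots,a_{k-1}]$ (with the empty-product convention $p^*=1$, $q^*=0$ when $k=1$), then
$$M(a_1)M(a_2)\cdots M(a_k)=\begin{pmatrix} p & -p^*\\ q & -q^*\end{pmatrix}.$$
Taking determinants of both sides yields $p^*q - p q^* =1$, so $p^*q\equiv 1\pmod p$. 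The same induction also delivers $1\le p^*<p$, using only that each $a_i\ge 2$, so $p^*$ is the canonical representative of $q^{-1}$ modulo $p$.

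The second step is to observe the conjugation identity $M(a)^T=D\,M(a)\,D$, where $D=\operatorname{diag}(1,-1)$. Since $D^2=I$, transposing the matrix product reverses the order of the factors after conjugation by $D$, and unwinding this gives
$$M(a_k)M(a_{k-1})\cdots M(a_1)=\begin{pmatrix} p & -q\\ p^* & -q^*\end{pmatrix}.$$
Applying the formula from the first paragraph to the reversed word $(a_k,\dots,a_1)$, the first column reads off the numerator and denominator of $[a_k,\dots,a_1]$ in lowest terms, so $[a_k,\dots,a_1]=p/p^*=p/(q^{-1}\bmod p)$.

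The only genuine subtlety is checking that $p^*$ really lands in $\{1,\dots,p-1\}$ rather than just representing the correct residue class. This is the standard monotonicity property of Hirzebruch--Jung convergents when every $a_i\ge 2$, and it is folded into the same induction that produces the matrix formula (using $p=a_kp^*-p^{**}\ge 2p^*-p^{**}>p^*$ together with the inductive bound $0\le p^{**}<p^*$). Everything else in the argument is formal matrix arithmetic.
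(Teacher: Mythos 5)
Your proof is correct and complete. The paper itself only says ``induction on $k$'' and defers the details to a reference, so strictly speaking you have supplied a full argument where the paper gives none; but your route is also genuinely different in flavour from the direct induction the paper gestures at. A direct induction would peel off $a_1$ (or $a_k$), relate $[a_1,\dots,a_k]$ to $[a_2,\dots,a_k]$, and track how the inverse mod $p$ transforms, which requires a slightly fiddly strengthening of the inductive hypothesis. You instead package the continued fraction as the product $M(a_1)\cdots M(a_k)=\left(\begin{smallmatrix} p & -p^*\\ q & -q^*\end{smallmatrix}\right)$, after which the reversal statement is free: the determinant gives $p^*q\equiv 1\pmod p$, and the identity $M(a)^T=DM(a)D$ with $D=\operatorname{diag}(1,-1)$ turns transposition into reversal of the word, so $[a_k,\dots,a_1]=p/p^*$ drops out of the first column. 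The one point that a careless version of this argument would miss — that $p^*$ is the \emph{canonical} representative of $q^{-1}\bmod p$, i.e.\ $1\le p^*<p$ — you address correctly via the monotonicity $P_{j}>P_{j-1}$ of the convergent numerators when all $a_i\ge 2$. The trade-off is that your approach front-loads a small amount of matrix bookkeeping (the convergent recursion $P_j=a_jP_{j-1}-P_{j-2}$ in disguise) in exchange for making the reversal step a one-line formal identity rather than the content of the induction; it also hands you the coprimality of $p$ and $p^*$ for free from the determinant, which the direct induction has to carry along separately.
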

\begin{proof}
This can be proved by induction on $k$.  For details, see for example \cite{hnk}.
\end{proof}

We pause to justify the statement made at the beginning of \Cref{sec:trees}.

\begin{lemma}
\label{lem:wahl}
Let $\mcS_1$ denote the set of strings obtainable from $[4]$ by iteration of 
$$[c_1,\dots,c_k]\mapsto[c_1+1,\dots,c_k,2]\mbox{ or }[2,c_1,\dots,c_k+1].$$
Let $\mcS_2$ denote the set of strings obtainable from $[4]=[2+2]$ via
\begin{align*}
[a_1,\dots,a_{k-1},a_k+b_l,b_{l-1},\dots,b_1]&\mapsto[a_1,\dots,a_{k-1},(a_k+1)+2,b_l,b_{l-1},\dots,b_1]\\
&\mbox{ or }[a_1,\dots,a_{k-1},a_k,2+(b_l+1),b_{l-1},\dots,b_1].
\end{align*}
Then $\mcS_1=\mcS_2$ is the set of Hirzebruch-Jung continued fraction expansions of $$\left\{\frac{p^2}{pq-1}\,:\,p>q>0, (p,q)=1\right\}.$$

Similarly the set of continued fraction expansions of $$\left\{\frac{p^2}{pq+1}\,:\,p>q>0, (p,q)=1\right\}$$
is obtainable from $[2,2,2]$ either via 
$$[c_1,\dots,c_k]\mapsto[c_1+1,\dots,c_k,2]\mbox{ or }[2,c_1,\dots,c_k+1]$$
or via
\begin{align*}
[a_1,\dots,a_{k-1},a_k,2,b_l,b_{l-1},\dots,b_1]&\mapsto[a_1,\dots,a_{k-1},(a_k+1),2,2,b_l,b_{l-1},\dots,b_1]\\
&\mbox{ or }[a_1,\dots,a_{k-1},a_k,2,2,(b_l+1),b_{l-1},\dots,b_1].
\end{align*}
\end{lemma}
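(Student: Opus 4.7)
The plan is to show that $\mcS_2$ equals the set of Hirzebruch-Jung continued fraction expansions of $p^2/(pq-1)$ for coprime $p>q\ge1$; since $\mcS_1$ equals this same set by Wahl's original recursion \cite{wahl}, the identity $\mcS_1=\mcS_2$ then follows. The second part of the lemma, concerning $p^2/(pq+1)$, proceeds in parallel: replace the base case $[4]$ by $[2,2,2]$, and the middle summand $a_k+b_l$ by the three consecutive entries $a_k,\,2,\,b_l$. In both settings the two recursion options will turn out to correspond to the same transitions on coprime pairs.

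The forward direction is a structural induction indexed by $n=k+l-2$, where $p/q=[a_1,\dots,a_k]$ and $p/(p-q)=[b_1,\dots,b_l]$. The base case $n=0$ is $(p,q)=(2,1)$, for which Lemma~\ref{lem:cf} gives $p^2/(pq-1)=4=[4]=[2+2]$, the root of $\mcS_2$. The key input for the inductive step is a Riemenschneider-type dichotomy: for any coprime pair with $p>2$, exactly one of the following holds --- either $a_k\ge3$, in which case $l\ge2$ and $b_l=2$, or $a_k=2$ with $k\ge2$, in which case $b_l\ge3$. In the first case I take as parent the pair with chains $[a_1,\dots,a_{k-1},a_k-1]$ and $[b_1,\dots,b_{l-1}]$; in the second, the pair with chains $[a_1,\dots,a_{k-1}]$ and $[b_1,\dots,b_{l-1},b_l-1]$. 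A direct reading of the right-hand sides of the recursion shows that option~1 applied to the case~(A) parent and option~2 applied to the case~(B) parent each recover the CF of $p^2/(pq-1)$; since the parent has $\tilde k+\tilde l=k+l-1$, induction closes.

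For the reverse inclusion I will check directly that each $\mcS_2$ operation sends the CF of some coprime $p^2/(pq-1)$ to the CF of another such. Reading the right-hand side of option~1 in the form $[a'_1,\dots,a'_{k'-1},a'_{k'}+b'_{l'},b'_{l'-1},\dots,b'_1]$ with $k'=k$ and $l'=l+1$ exhibits new chains $\tilde a=[a_1,\dots,a_{k-1},a_k+1]$ and $\tilde b=[b_1,\dots,b_l,2]$; the compatibility that these chains belong to a common coprime pair $(p',q')$ follows from the convergent identity $pQ_{k-1}-qP_{k-1}=1$, after which Lemma~\ref{lem:cf} applied to $(p',q')$ confirms that the output string is indeed the CF of $(p')^2/(p'q'-1)$. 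Option~2 is handled symmetrically, with the roles of $a$ and $b$ interchanged.

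The main obstacle will be the Riemenschneider dichotomy on last entries of the $a$- and $b$-chains. I expect to prove it cleanly by induction along the inverse Stern-Brocot tree $\W_1$: the identities \eqref{eq:cf} show that the two tree moves $(p,q)\mapsto(p+q,q)$ and $(p,q)\mapsto(2p-q,p)$ modify only the first entries of the chains, leaving $a_k$ and $b_l$ unchanged. The dichotomy therefore propagates from the two children of the root $(2,1)$ --- namely $(3,1)$, with $a_k=3$ and $b_l=2$, and $(3,2)$, with $a_k=2$ and $b_l=3$ --- to every coprime pair with $p>2$.
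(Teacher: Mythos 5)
Your argument is correct and rests on the same machinery as the paper's proof --- \Cref{lem:cf}, \Cref{lem:cfinv} and the identities \eqref{eq:cf} governing the complementary chains for $p/q$ and $p/(p-q)$ --- but it is organised differently. The paper runs a single generative, bottom-up structural induction on the trees $\W_1$ and $\W_2$: because each coprime pair $(p,q)$ occurs exactly once as a node label, the recursion starting from $[4]$ hits each continued fraction of $p^2/(pq-1)$ exactly once and both inclusions come for free. You instead import the $\mcS_1$ half from \cite{wahl} (which the paper itself cites as the source of that recursion, so this is legitimate) and prove the $\mcS_2$ half by two separate inclusions, driving the forward one top-down via a parent construction: the dichotomy ``for $p>2$ exactly one of $a_k,b_l$ equals $2$'' singles out a unique predecessor with $k+l$ one smaller. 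This is exactly the statement that every node of $\W_2$ has a unique parent, so you are in effect re-proving the ``appears exactly once'' property that the paper declares easy; that is a fair trade and makes the uniqueness explicit. Two points need tightening. First, your propagation of the dichotomy claims the tree moves ``leave $a_k$ and $b_l$ unchanged,'' which fails literally on the outer branches of $\W_1$: when $k=1$ the move $(p,q)\mapsto(p+q,q)$ alters $a_1=a_k$, and when $l=1$ the move $(p,q)\mapsto(2p-q,p)$ alters $b_1=b_l$. The dichotomy still propagates (in those cases $q=1$ or $q=p-1$, the changed entry stays $\ge 3$, and the other chain's last entry remains $2$), but the step as written omits this case. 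Second, in the forward direction you should also record why the truncated chains $[a_1,\dots,a_k-1]$ and $[b_1,\dots,b_{l-1}]$ (respectively $[a_1,\dots,a_{k-1}]$ and $[b_1,\dots,b_l-1]$) are again complementary for a coprime pair; this is the same Riemenschneider/reversal computation you invoke for the reverse inclusion, read backwards, and once it is stated the induction closes.
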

\begin{proof}
The proof is by induction, using the fact that each coprime pair $(p,q)$ with $p>q>0$ appears exactly once as a node label in each of the trees $\W_1$ and $\W_2$ described in \Cref{sec:trees}.  For the base case of the first statement, the root of both trees is labelled $(2,1)$ and the continued fraction expansion of $2^2/(2\cdot1-1)$ is $[4]$.  The inductive step follows from Lemmas \ref{lem:cf} \and \ref{lem:cfinv} using \eqref{eq:cf} and is left as an exercise for the reader.  The second statement is similar. 
\end{proof}

We now describe a family of slice surfaces $\Delta_{p,q}$ bounded by the links $K_{p,q}:=S(p^2,pq-1)$.  The first such is $\Delta_{2,1}$, shown in \Cref{fig:Delta21}; applying the band move shown in pink converts the diagram to one of the two-component unlink.

\begin{figure}[htbp]
\centering
\includegraphics[width=\textwidth]{./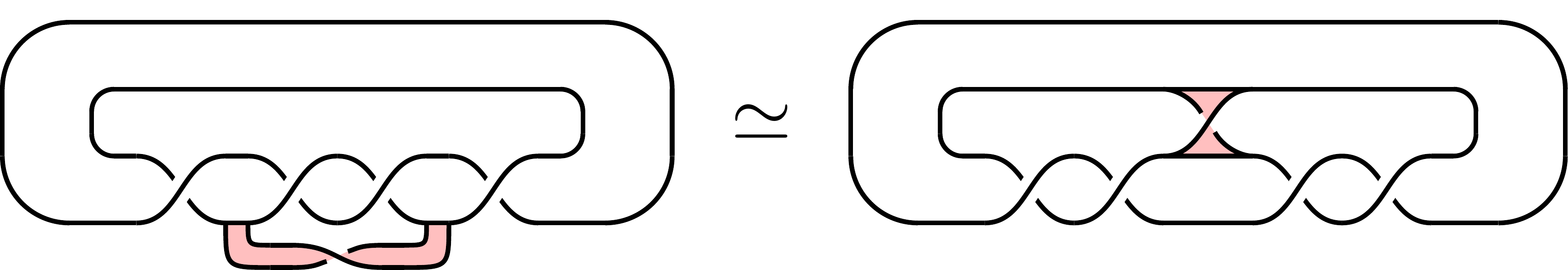}
\begin{narrow}{0.3in}{0.3in}
\caption
{{\bf The ribbon surface $\Delta_{2,1}$.}}
\label{fig:Delta21}
\end{narrow}
\end{figure}

There are  two ways to recursively build the family $\Delta_{p,q}$.  Starting with the left diagram in \Cref{fig:Delta21}, we apply the recursive rule indicated in \Cref{fig:exp2}.  Alternatively we may start with the right diagram in  \Cref{fig:Delta21} and apply the recursion from \Cref{fig:exp1}. 

\begin{figure}[htbp]
\centering
\includegraphics[width=\textwidth]{./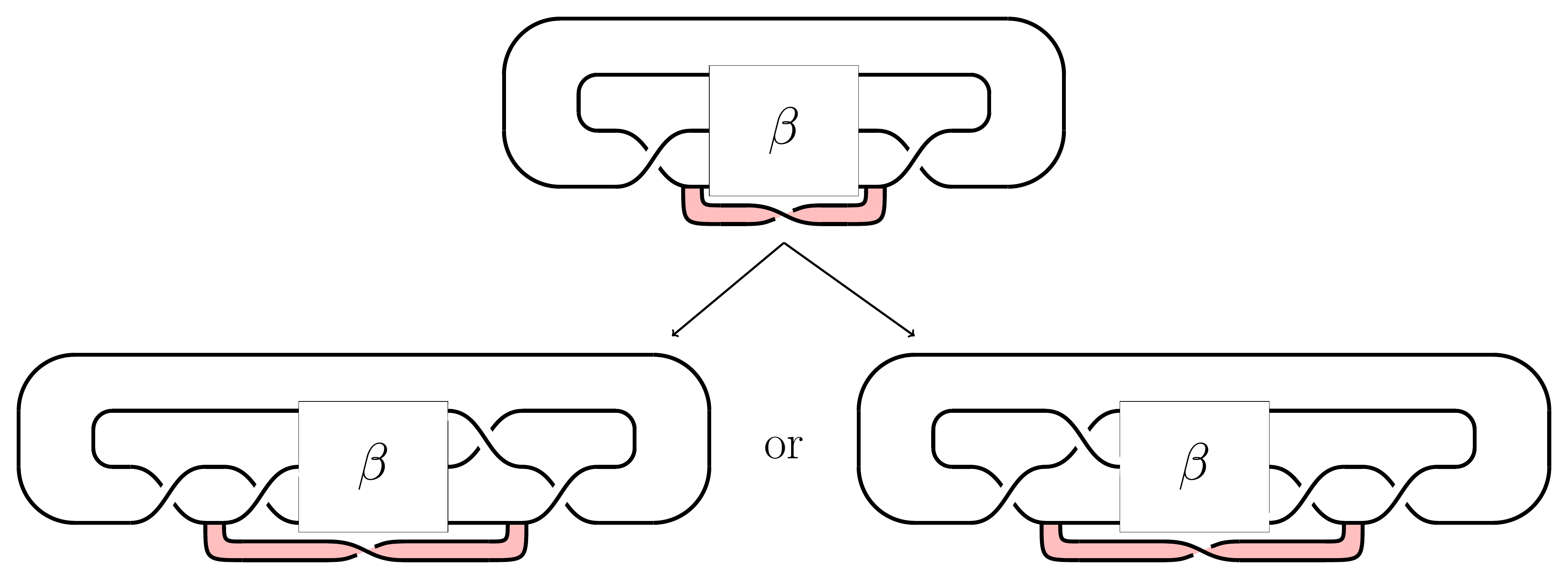}
\begin{narrow}{0.3in}{0.3in}
\caption
{{\bf Moving down the inverse Stern-Brocot tree.}  The box marked $\beta$ contains a 3-braid.}
\label{fig:exp2}
\end{narrow}
\end{figure}

\begin{figure}[htbp]
\centering
\includegraphics[width=11cm]{./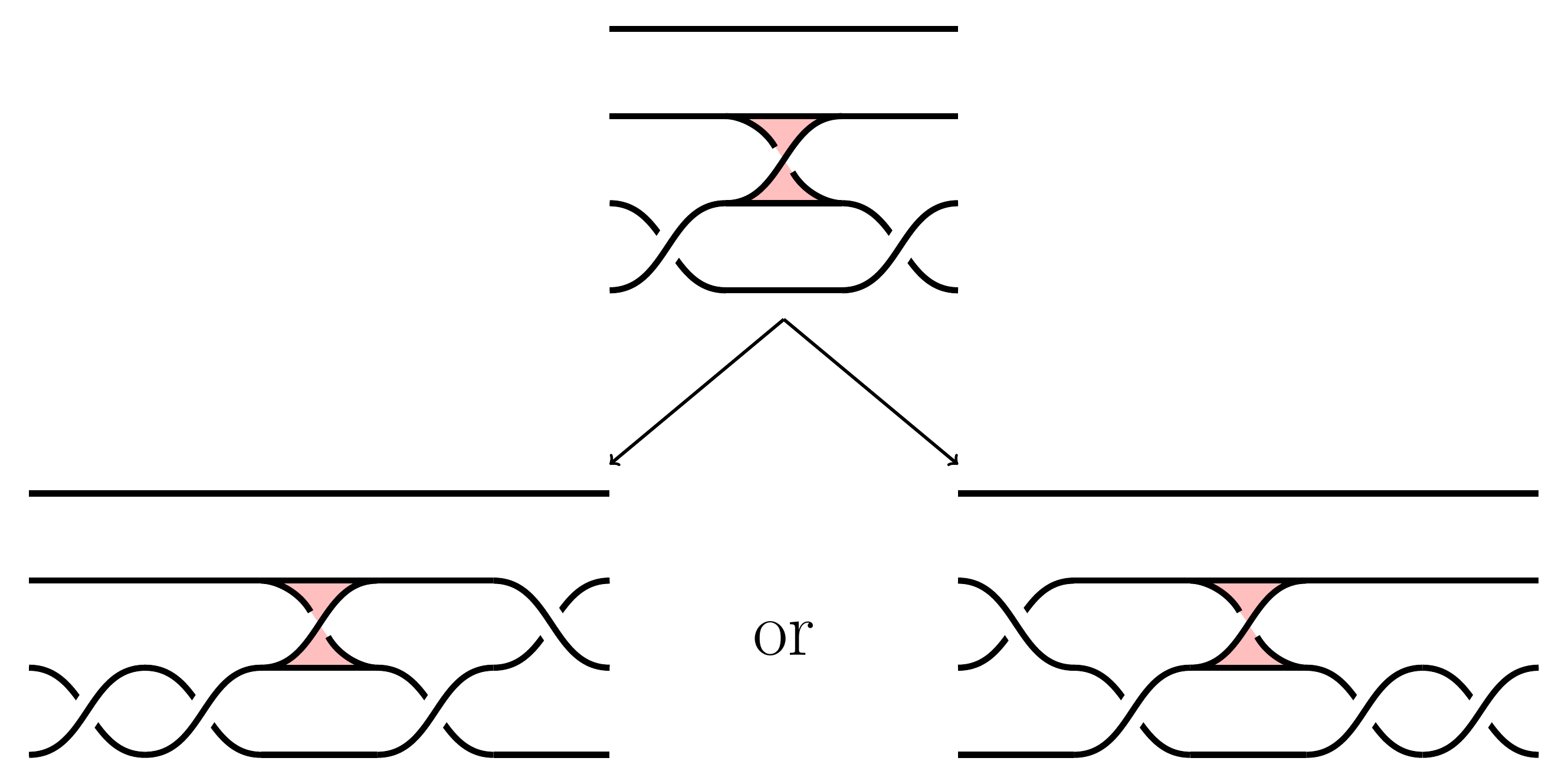}
\begin{narrow}{0.3in}{0.3in}
\caption
{{\bf Moving down the Stern-Brocot tree.}}
\label{fig:exp1}
\end{narrow}
\end{figure}

In either case we see that if the band move indicated in the top diagram converts the link to a two-component unlink, then the same is true for each of the diagrams below.  Suppose that the link in the top diagram in either case is a two-bridge link corresponding to the continued fraction
$$[a_1,\dots,a_{k-1},a_k+b_l,b_{l-1},\dots,b_1].$$
Then the two lower diagrams in \Cref{fig:exp2} are the standard alternating diagrams of the two-bridge links corresponding to
$$[a_1+1,\dots,a_{k-1},a_k+b_l,b_{l-1},\dots,b_1,2]$$
and
$$[2,a_1,\dots,a_{k-1},a_k+b_l,b_{l-1},\dots,b_1+1],$$
respectively, and the two lower diagrams in \Cref{fig:exp1} are the standard diagrams of the two-bridge links corresponding to
$$[a_1,\dots,a_{k-1},(a_k+1)+2,b_l,b_{l-1},\dots,b_1]$$
and
$$[a_1,\dots,a_{k-1},a_k,2+(b_l+1),b_{l-1},\dots,b_1]$$
respectively.

Comparing with \Cref{lem:cf}, we see that if the top diagram of \Cref{fig:exp2} represents a band move converting $K_{p,q}$ to the two-component unlink, then the bottom two diagrams represent such a band move for $K_{p',q'}$, where $(p',q')=(p+q,q)$ on the left and $(2p-q,p)$ on the right; this corresponds to the recursive rule for the inverse Stern-Brocot tree $\W_1$ from \Cref{sec:trees}.  Recursively we obtain a ribbon surface $\Delta_{p,q}$ bounded by each $K_{p,q}$, which has two zero-handles and a single 1-handle.

Similarly if the top diagram of \Cref{fig:exp1} represents a band move converting $K_{p,q^{-1}}$ to the two-component unlink, then the bottom two diagrams represent such a band move for $K_{p',q'^{-1}}$, where $(p',q')=(p+q,q)$ on the left and $(2p-q,p)$ on the right; this corresponds to the recursive rule for the  Stern-Brocot tree $\W_2$ from \Cref{sec:trees}.  We again obtain a ribbon surface $\Delta'_{p,q}$ bounded by each $K_{p,q}$, which has two zero-handles and a single 1-handle.

We observe  by induction (using either \Cref{fig:exp2} or \Cref{fig:exp1} for the inductive step) that the bands giving the two surfaces have their ends on the same component of $K_{p,q}$, and if both band moves are applied one after the other then the second one is the standard band move converting the two-component unlink to the three-component unlink.
This shows that the surface given by the pair of band moves is obtained from either $\Delta_{p,q}$ and $\Delta'_{p,q}$ by adding a cancelling pair of critical points, and thus that the slice surfaces $\Delta_{p,q}$ and $\Delta'_{p,q}$ are isotopic to each other. 

It is straightforward to draw the bands described above for a particular example.  Begin by drawing the standard alternating diagram of $K_{p,q}=S(p^2,pq-1)$ as described above.  Then as in \Cref{fig:Delta75}, the band obtained from \Cref{fig:exp2} goes horizontally across the bottom of the diagram, attached just inside the last crossing at each end.  The band obtained from \Cref{fig:exp1} is placed vertically, half way along the diagram, with the same number of crossings on either side.  Note in particular that the number of crossings in each region to the right and left of the vertical band may be read off from the continued fraction expansions of $p/q$ and $p/(p-q)$ as in \Cref{lem:cf}. To distinguish between these band moves we will refer to them from now on as the horizontal band and the vertical band associated to $\Delta_{p,q}$.

\begin{figure}[htbp]
\centering
\includegraphics[width=14cm]{./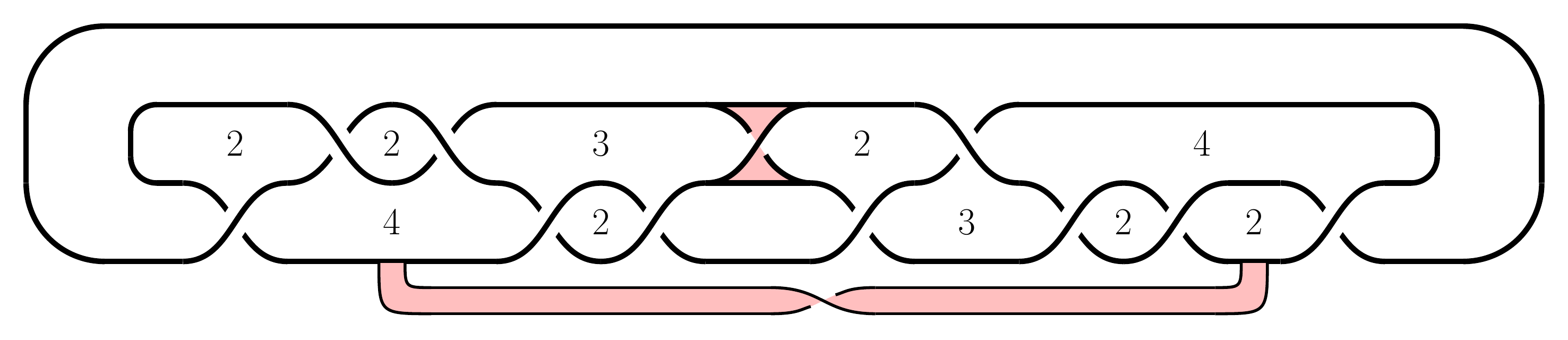}
\begin{narrow}{0.3in}{0.3in}
\caption
{{\bf The slice disk $\Delta_{7,5}$.}  Each label gives a count of crossings in the labelled region.  Note how these correspond to the continued fraction coefficients of $7/5=[2,2,3]$ and $7/2=[4,2]$ (compare Lemmas \ref{lem:cf} and \ref{lem:cfinv}, noting that $pq+1=-(pq-1)^{-1}\mod p^2$).}
\label{fig:Delta75}
\end{narrow}
\end{figure}

The following lemma and proposition tell us that various constructions of rational balls bounded by $L(p^2,pq-1)$ considered in the literature in relation to rational blow-down are all the same up to diffeomorphism.  This is  known to experts, but nonetheless there is some confusion in the literature, so we sketch a proof.  The author is grateful to Yank\i\ Lekili and Paolo Lisca for helpful conversations on this point.

\begin{lemma}
\label{lem:DeltaB}
Let $p>q$ be coprime natural numbers with 
$$p/q=[a_1,\dots,a_k]\quad\mbox{and}\quad p/(p-q)=[b_1,\dots,b_l],$$
where each $a_i$ and $b_j$ is at least 2.
Then the double cover of $B^4$ branched along $\Delta_{p,q}$ is given by the relative Kirby diagram in \Cref{fig:Wpq}.
\end{lemma}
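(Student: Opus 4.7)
The plan is to prove the lemma by structural induction on the inverse Stern-Brocot tree $\W_1$, mirroring the recursive construction of the slice surface $\Delta_{p,q}$ given in \Cref{fig:exp2}. Both the family of surfaces and the expected Kirby diagram are built from the base case $(p,q)=(2,1)$ by the same two moves $(p,q)\mapsto(p+q,q)$ and $(p,q)\mapsto(2p-q,p)$, so it is natural to match them up one move at a time.

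For the base case, I would compute the double cover of $B^4$ branched along $\Delta_{2,1}$ directly from \Cref{fig:Delta21}. The surface has two $0$-handles (disks bounded by a two-component unlink) and one $1$-handle (the pink band). Cutting along the band presents $\Delta_{2,1}$ as a pair of disjoint disks whose branched double cover is $S^1\times D^3$, with Kirby diagram a single dotted circle; reattaching the band then adds a $2$-handle whose attaching circle and framing can be read off from the band's embedding in $S^3$ by the standard lifting rules. Verify that the resulting diagram agrees with \Cref{fig:Wpq} specialised to $(p,q)=(2,1)$, in particular that the continued fraction $p^2/(pq-1)=4=[4]$ corresponds to a single $-4$-framed $2$-handle linking the $1$-handle.

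For the inductive step, assume the lemma holds at a node $(p,q)$ of $\W_1$, and consider its two children $(p+q,q)$ and $(2p-q,p)$. The local modification of $\Delta_{p,q}$ in \Cref{fig:exp2} consists of adding one extra crossing at one end of the horizontal band together with an extra meridional strand of the two-bridge diagram. Passing to the double branched cover, I would check that each such local move amounts to (i) attaching a new $-2$-framed $2$-handle to the existing Kirby diagram along a meridian of an appropriate strand and (ii) decreasing by one the framing of the existing ``outermost'' $2$-handle at that end. Comparing with \Cref{lem:cf}, this is exactly the passage from $[a_1,\ldots,a_{k-1},a_k+b_l,b_{l-1},\ldots,b_1]$ to either $[a_1+1,\ldots,a_{k-1},a_k+b_l,b_{l-1},\ldots,b_1,2]$ or $[2,a_1,\ldots,a_{k-1},a_k+b_l,b_{l-1},\ldots,b_1+1]$, which is the recursive description of the Kirby diagram in \Cref{fig:Wpq}.

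The main obstacle will be the bookkeeping of framings in the inductive step. The local twists added near the horizontal band in \Cref{fig:exp2} lift to Dehn twists on specific strands in the branched double cover, and I expect the change in framing of the affected $2$-handle to be exactly the ``$+1$'' required by the Stern-Brocot-type recursion. Making this framing computation rigorous, and checking that the new $-2$-framed $2$-handle attaches along the correct meridian so that the linking pattern matches \Cref{fig:Wpq}, is where the bulk of the work lies; I expect to follow the branched-cover conventions used in the inductive arguments of \cite{lisca1,lisca2}, which the author has already cited as the inspiration for this induction scheme.
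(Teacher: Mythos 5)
Your strategy is genuinely different from the paper's, and the paper's argument is worth having in mind because it is direct and needs no induction at all: apply the vertical band move to $K_{p,q}$ to get a diagram of the two-component unlink; the double cover of $B^4$ branched along the two resulting disks is $S^1\times B^3$, whose boundary $S^1\times S^2$ is presented, via the pushed-in chessboard surface of that unlink diagram, by surgery on the bracketed chain $(a_1,\dots,a_k,1,b_l,\dots,b_1)$ in one step; the saddle of the band then lifts to the single genuine 2-handle, the $(-1)$-framed red curve. An induction over $\W_1$ could in principle be organised (the tangle replacements of \Cref{fig:exp2} are local, and branched covers can be compared over the ball where the branch loci differ), but it is much more laborious, and as written your inductive step has a real gap.

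The gap is that you lose track of what the diagram in \Cref{fig:Wpq} actually represents. It is a \emph{single} 2-handle attached to $S^1\times B^3$: the bracketed circles are not handles of the manifold but a surgery presentation of $\partial(S^1\times B^3)=S^1\times S^2$ (which is why their framings carry brackets), and the only genuine 2-handle is the red $(-1)$-framed curve, i.e.\ the lift of the band core with its induced framing. Your base case gets this structure right, but your inductive step then speaks of ``attaching a new $-2$-framed 2-handle'' and changing the framing of an ``existing outermost 2-handle'' --- statements that cannot be about the 4-manifolds themselves ($B_{p,q}$ and $B_{p+q,q}$ are both rational homology balls, not related by a 2-handle attachment) and only make sense as modifications of the bracketed presentation. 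Crucially, you never say what happens to the one actual 2-handle: where the lifted band sits relative to the new bracketed curve after each move, and why its framing is $-1$. Identifying that curve and framing is the entire content of the lemma, so the part of the argument you have deferred to ``framing bookkeeping'' is not the Goeritz-type framings of the chain (those are automatic from the chessboard surface) but the datum the induction is supposed to produce. A secondary mismatch: you run the recursion with the horizontal band of \Cref{fig:exp2}, whereas the splitting of $a_k+b_l$ into $a_k,1,b_l$ in \Cref{fig:Wpq} reflects the vertical band placed in the middle of that twist region; using the vertical band description (or the direct chessboard-surface argument) avoids having to re-derive that splitting at the end.
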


\begin{proof}  The part of the Kirby diagram shown in black, with bracketed framing indices, is a Kirby diagram for the double cover of the 4-ball branched along one of the chessboard surfaces of the unlink diagram which results from the vertical band move.  The branched double cover of the link cobordism given by inverting the vertical band move is the cobordism given by attaching a 2-handle along the $(-1)$-framed red curve. 
\end{proof}

\begin{figure}[htbp]
\centering
\includegraphics[width=\textwidth]{./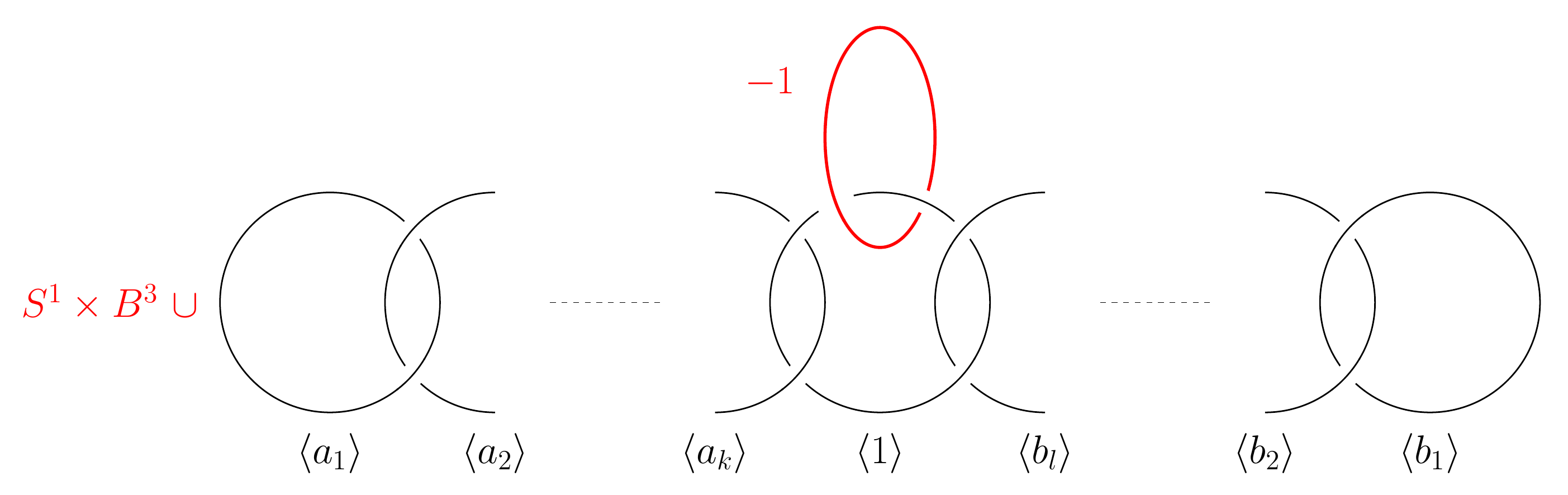}
\begin{narrow}{0.3in}{0.3in}
\caption
{{\bf A Kirby diagram for $W_{p^2,pq-1}(a_1,\dots,a_k,1,b_l,\dots,b_1)$.}  This represents a single 2-handle attached to $S^1\times B^3$.}
\label{fig:Wpq}
\end{narrow}
\end{figure}

\begin{proposition}
\label{prop:DeltaB}
For each coprime pair of natural numbers $p>q$, the slice surface $\Delta_{p,q}$ bounded by $K_{p,q}=S(p^2,pq-1)$ is isotopic to the slice surface described by Casson and Harer in \cite{ch}.  Furthermore the double cover of the 4-ball branched along $\Delta_{p,q}$ is diffeomorphic to the Milnor fibre $B_{p,q}$ of the cyclic quotient singularity of type $\frac{1}{p^2}(1,pq-1)$.
\end{proposition}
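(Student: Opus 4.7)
The plan is to address the two assertions separately: first the isotopy of slice surfaces, then the diffeomorphism of branched double covers.

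For the isotopy claim, I would identify both $\Delta_{p,q}$ and the Casson--Harer surface as ribbon surfaces obtained by attaching a single band to the standard alternating diagram of $K_{p,q} = S(p^2,pq-1)$ at a specific position. By \Cref{lem:cf} and \Cref{lem:cfinv}, the continued fraction expansion of $p^2/(pq-1)$ is palindromic-with-hinge in the shape $[a_1,\dots,a_{k-1},a_k+b_l,b_{l-1},\dots,b_1]$, so the diagram of $K_{p,q}$ naturally splits into two mirror halves separated by the hinge crossings; Casson--Harer place their band exactly at this midpoint, which coincides with the vertical band of $\Delta_{p,q}$ shown in \Cref{fig:Delta75}. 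I would formalise the match by structural induction on the Stern--Brocot tree $\W_2$: the base case $(p,q)=(2,1)$ is the surface of \Cref{fig:Delta21}, and the inductive step follows because the recursion in \Cref{fig:exp1} performs exactly the local modification used in the Casson--Harer construction (one more crossing added on one side of the band, and a new mirrored block on the other).

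For the diffeomorphism with the Milnor fibre, I would start from the Kirby diagram for the branched double cover provided by \Cref{lem:DeltaB} and \Cref{fig:Wpq}. The Milnor fibre of $\frac{1}{p^2}(1,pq-1)$ is presented, via the standard chain presentation of a cyclic quotient singularity smoothing (see \cite{lm,wahl}), by a 1-handle together with a single $(-1)$-framed 2-handle linking a chain of black handles whose framings are determined by the continued fraction of $p^2/(pq-1)$. By \Cref{lem:cf}, this is precisely $[a_1,\dots,a_{k-1},a_k+b_l,b_{l-1},\dots,b_1]$, which is exactly the data in \Cref{fig:Wpq}. The two Kirby diagrams can then be seen to agree after a bounded sequence of handle slides of the red curve over the black chain, verifying the diffeomorphism.

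The main obstacle will be the bookkeeping in the handle calculus and keeping track of the conventions that differ across the papers being reconciled: the orientation choice embedded in the convention that $S(3,1)$ is the left-handed trefoil, the sign conventions on framings of the red and black handles, and the orientation of the Milnor fibre inherited from the complex structure on the smoothing. Once these are pinned down, the remaining identification with the symplectic filling coming from the universal cover of the tight contact structure on $Y_{p,q}$ follows from the uniqueness (up to symplectic deformation and hence diffeomorphism) of that filling in its smoothing component, which is realised by the Milnor fibre.
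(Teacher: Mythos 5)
Your treatment of the first assertion is fine and close in spirit to the paper's: the paper simply fixes the Casson--Harer parameters ($c=-1$, $x=1/0$, $y=-q/p$, $z=-(p-q)/p$), redraws their diagram as in \Cref{fig:DeltaCH}, and invokes \Cref{lem:cf} to match it with the vertical-band description of $\Delta_{p,q}$; your structural induction on $\W_2$ is an acceptable way to formalise the comparison that the paper leaves to the reader.

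The second assertion is where there is a genuine gap. Your argument rests on ``the standard chain presentation of a cyclic quotient singularity smoothing,'' i.e.\ on the claim that the Milnor fibre of $\frac{1}{p^2}(1,pq-1)$ comes with a known Kirby diagram of the form in \Cref{fig:Wpq}. No such direct handle presentation is available to cite: neither \cite{wahl} nor \cite{lm} derives one by handle calculus, and producing it is precisely the content of the statement being proved (this is exactly the ``confusion in the literature'' the proposition is meant to resolve). The Milnor fibre is defined complex-analytically, and the only route to a diffeomorphism with the handlebody $W_{p^2,pq-1}(a_1,\dots,a_k,1,b_l,\dots,b_1)$ is indirect: \Cref{lem:DeltaB} identifies that handlebody with the double cover branched along $\Delta_{p,q}$; Lisca's classification \cite{lisca3} shows it is the \emph{unique} rational homology ball symplectic filling of $(L(p^2,pq-1),\bar\xi_\mathrm{st})$ up to diffeomorphism; and the Milnor fibre is such a filling \cite{lm}, so the two must coincide. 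Your closing sentence gestures at a uniqueness argument, but it is phrased as a supplementary remark and invokes the wrong uniqueness (``uniqueness of that filling in its smoothing component'' rather than uniqueness among rational ball fillings of the fixed contact structure $\bar\xi_\mathrm{st}$). To repair the proof you should discard the direct Kirby-calculus matching of the Milnor fibre and make the Lisca uniqueness argument the main engine of the second half.
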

\begin{proof}
We first consider the slice surface given by Casson and Harer \cite{ch}.  In the notation of that paper we take $c=-1$, $x=1/0$, $y=-q/p$, and $z=-(p-q)/p$.  Then the third diagram down on \cite[page 32]{ch}, with the crossing in the band shown there changed in order for the band move to yield the unlink, may be seen to be isotopic to that shown in \Cref{fig:DeltaCH}.  The reader may verify, with reference to \Cref{lem:cf}, that this agrees with the description of $\Delta_{p,q}$ given above, with the vertical band.

\begin{figure}[htbp]
\centering
\includegraphics[width=8cm]{./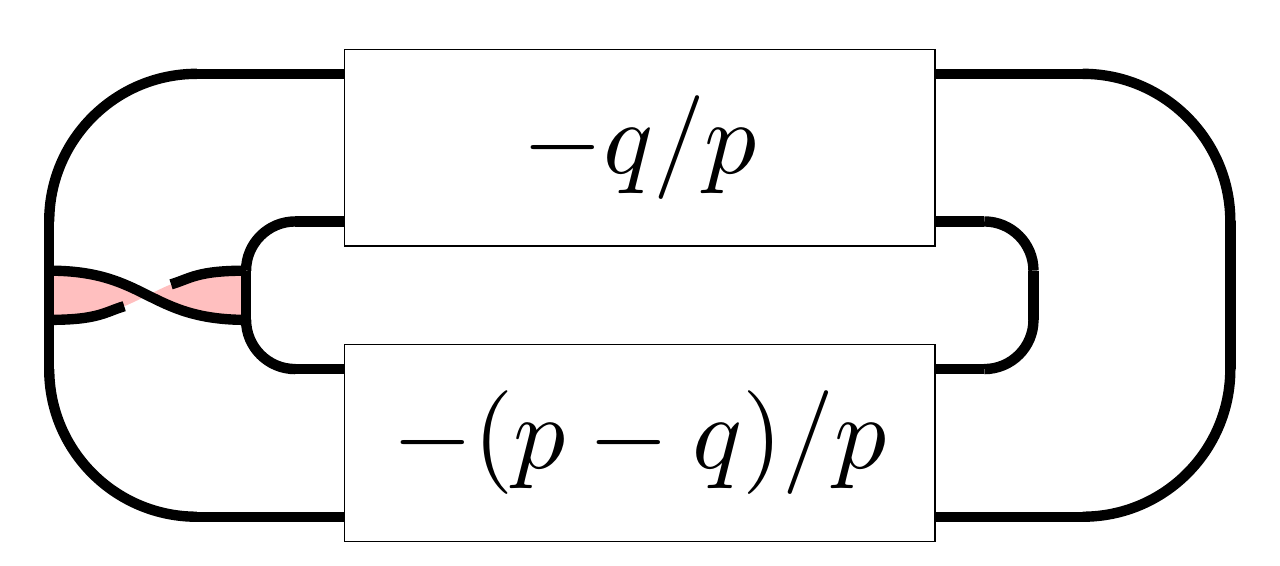}
\begin{narrow}{0.3in}{0.3in}
\caption
{{\bf The slice surface of Casson and Harer.} For the rational tangle notation used in this diagram see \cite[Description 2]{ch}. }
\label{fig:DeltaCH}
\end{narrow}
\end{figure}

The last statement of the Proposition follows, as in \cite[Lemma 3.1]{lm}, from Lisca's classification of symplectic fillings of the tight contact structure $\bar\xi_\mathrm{st}$ on a lens space coming from that on its universal cover $S^3$ \cite{lisca3}.

Using the method of proof of \cite[Corollary 1.2]{lisca3} one may show that it follows from \cite[Theorem 1.1]{lisca3} that there is a unique rational ball symplectic filling of $(L(p^2,pq-1),\bar\xi_\mathrm{st})$ up to diffeomorphism.  This is the manifold
$W_{p^2,pq-1}(a_1,\dots,a_k,1,b_l,\dots,b_1)$
which is shown in \Cref{fig:Wpq}.
We have seen in \Cref{lem:DeltaB} that this is diffeomorphic to the double cover of $B^4$ branched along $\Delta_{p,q}$.  The last statement of the Proposition now follows, since the Milnor fibre of  the cyclic quotient singularity of type $\frac{1}{p^2}(1,pq-1)$ is a rational ball symplectic filling of $(L(p^2,pq-1),\bar\xi_\mathrm{st})$ \cite{lm}.
\end{proof}


\section{Embeddings via double branched covers}
\label{sec:proofs}
In this section we provide proofs for the theorems stated in the introduction.  The proofs will involve manipulations of ``knot with bands" diagrams representing properly embedded surfaces in $B^4$.  These are diagrams consisting of a knot or link $K$ together with a set of bands attached, such that the band moves convert $K$ to an unlink $U$.  As usual, we interpret this as describing a movie for a surface embedding in $B^4$ to which the radial distance function restricts to give a Morse function, with a minimum for each component of $U$ and a saddle for each band.  Maxima of such a surface would result in $K$ being replaced by a union of $K$ and an unlink, but these will not occur in the embeddings we consider.  The knot with bands diagram does not specify the order in which the saddles occur during the movie, but changing this order does not change the isotopy class of the embedding.  By a slight abuse of notation, we will occasionally use the same letter to refer to an embedded surface in $B^4$ or to a knot with bands diagram representing that surface.
Following \cite{frank} we note that the resulting embedded surface is also unchanged up to isotopy by any sequence of {\em band slides} or {\em band swims}.  These moves are shown in \Cref{fig:slideswim}.    Both moves may be interpreted as an isotopy of the blue band coming from the top of the diagram after applying the band move indicated by the pink band; one is then free to slide and swim any band over any other, using the freedom to change the order of the saddle points.
It is shown in \cite{kk,frank} that these moves together with introduction and removal of cancelling pairs gives a complete calculus for 2-knots, but we will not need this here.

\begin{figure}[htbp]
\centering
\includegraphics[width=\textwidth]{./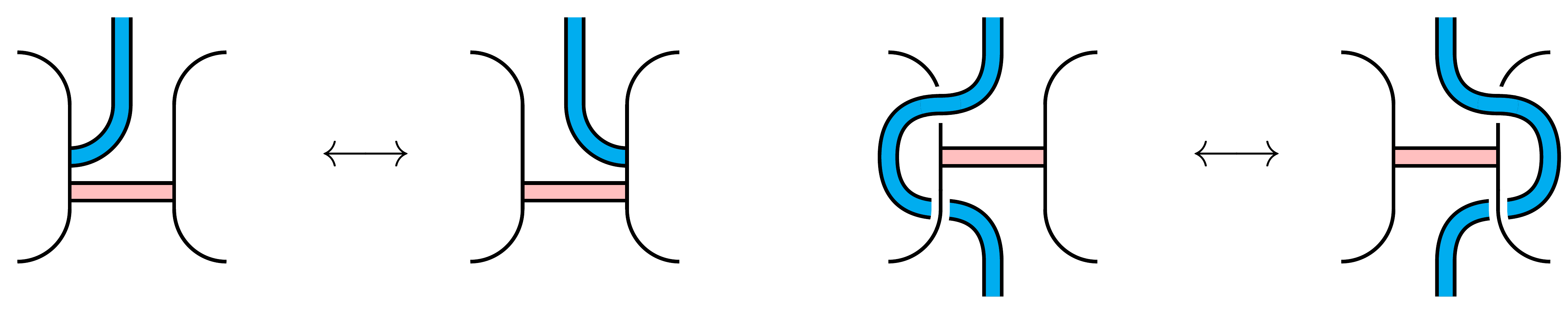}
\begin{narrow}{0.3in}{0.3in}
\caption
{{\bf Band slide and band swim.}}
\label{fig:slideswim}
\end{narrow}
\end{figure}

Given two surfaces $F_1$ and $F_2$ in $B^4$, we say that $F_1$ is a sublevel surface of $F_2$ if there is a smooth embedding of pairs
$$(B^4_{0.5},F_1)\hookrightarrow(B^4_1,F_2)$$
where $B^4_r$ is the ball of radius $r$.  This is equivalent to existence of a movie for $F_2$ whose final scenes consist of a movie for $F_1$.  This in turn may be realised as a knot with bands diagram for $F_2$ which yields a knot with bands for $F_1$ after applying a subset of the band moves (and possibly also removing an unlink corresponding to some minima).

One way to produce properly embedded surfaces in the 4-ball is to take an embedded surface with no closed components in $S^3$ and push its interior inside the 4-ball.  To obtain a knot with bands diagram of this, we choose a handle decomposition of the surface with 0- and 1-handles and attach bands dual to the 1-handles.  To put this another way, we choose a set of properly embedded arcs in the surface that cut it up into a union of disks; neighbourhoods of these arcs give bands.  The examples of the negative M\"{o}bius band $P_-$ and the twisted annulus $F_{-4}$ are shown in \Cref{fig:FP}.
Another example is shown in the bottom two diagrams of \Cref{fig:Fdelta75}; a different knot with bands representation of the same surface is shown in the bottom right diagram of \Cref{fig:75embed}.

\begin{figure}[htbp]
\centering
\includegraphics[width=\textwidth]{./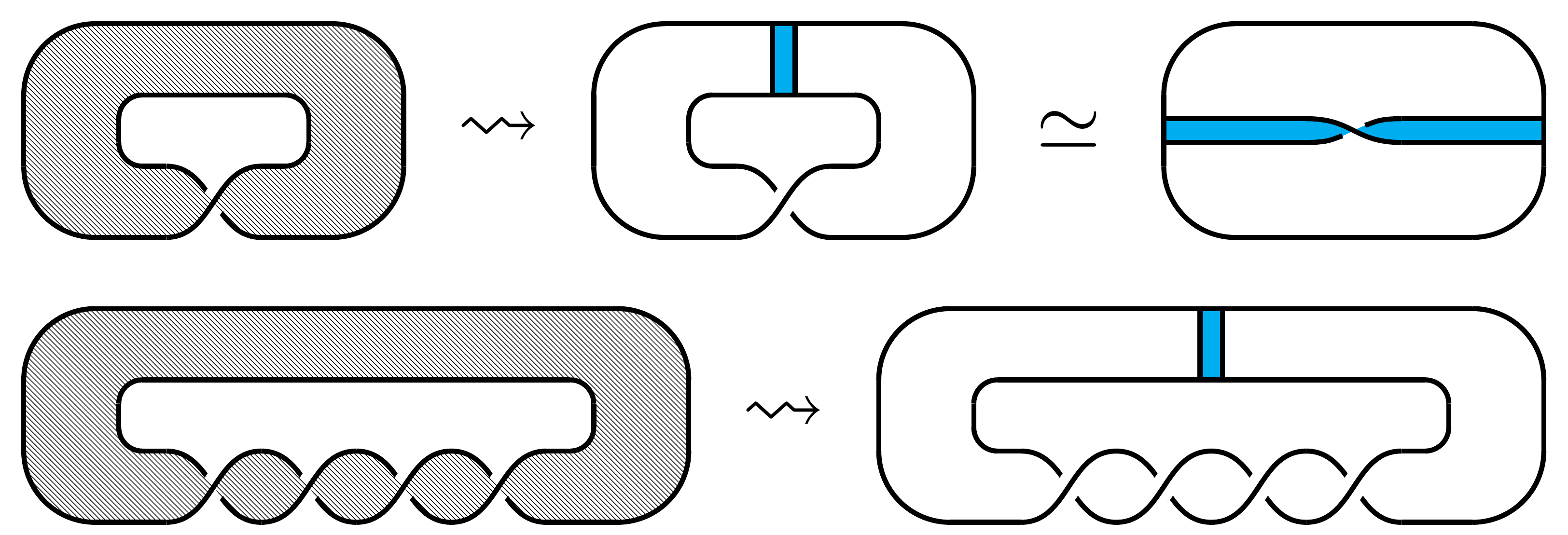}
\begin{narrow}{0.3in}{0.3in}
\caption
{{\bf The surfaces $P_-$ and $F_{-4}$.} The branched double covers are $\overline{\CP^2}$ and $V_{-4}$ respectively.  }
\label{fig:FP}
\end{narrow}
\end{figure}

\begin{proof}[Proof of Theorems \ref{thm:Kh12} and \ref{thm:pps}]
Let $p>q$ be a coprime pair of natural numbers.  We will describe a surface $F^\delta_{p,q}$ whose branched double cover is the $\delta$-half linear chain $Z_{p,q}$, and which has $\Delta_{p,q}$ as a sublevel surface.  \Cref{thm:pps}, and hence also \Cref{thm:Kh12}, then follows on taking branched double covers.

We first describe the $\delta$-half linear chain $Z_{p,q}$.  As usual we take $p/q=[a_1,\dots,a_k]$ with each $a_i\ge2$.  Then $Z_{p,q}$ is the linear plumbing of disk bundles over $S^2$ with weights $[a_1,\dots,a_{k-1},a_k+1].$
This is the double cover of $B^4$ branched along the pushed-in black surface of the standard alternating diagram of the corresponding two-bridge knot, which is $S(p+q^{-1}\mod p,q^{-1}\mod p)$.  This in turn is obtained from the standard diagram for $S(p^2,pq-1)$ by drawing a vertical line through the diagram with two more crossings on the left than on the right, and capping off the portion of the diagram to the left of this line.  We denote this surface by $F^\delta_{p,q}$.  An example is shown in \Cref{fig:Fdelta75}, which also indicates our convention for which is the black chessboard surface.

We next describe a knot with bands representation of a further surface $F'_{p,q}$.  Start with the standard alternating diagram of $K_{p,q}=S(p^2,pq-1)$.  Add the pink vertical band to obtain the slice surface $\Delta_{p,q}$, as in the first diagram in \Cref{fig:75embed}.  Then going to the right from the centre of the diagram, replace all but the first of the crossings corresponding to edges of $\Gamma_{p,q}$  incident to $v_0$ with blue bands, as in the second diagram of \Cref{fig:75embed}.  Call the resulting surface $F'_{p,q}$.  It is clear that this has the same boundary link as $F^\delta_{p,q}$.  We claim that in fact these two embedded surfaces in $B^4$ are isotopic.

\begin{figure}[htbp]
\centering
\includegraphics[width=\textwidth]{./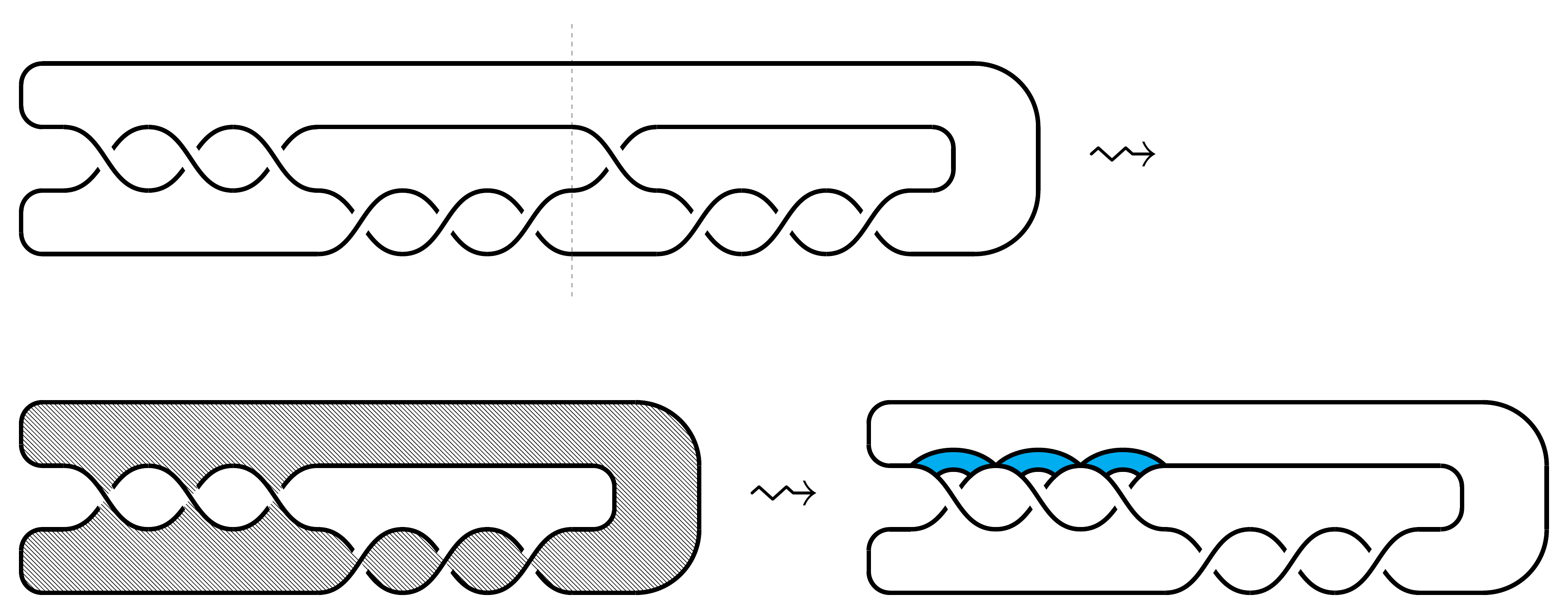}
\begin{narrow}{0.3in}{0.3in}
\caption
{{\bf The surface $F^\delta_{7,5}$.} The top diagram shows $S(49,34)$, which is then cut vertically one crossing to the right of the centre line yielding $S(10,7)$.  
The diagram on the bottom left shows the black surface for $S(10,7)$.  Pushing the interior into the 4-ball yields $F^\delta_{7,5}$, which is shown on the right.}
\label{fig:Fdelta75}
\end{narrow}
\end{figure}

We prove this claim using structural induction on the Stern-Brocot tree $\W_2$.  The proof of the base case is shown in \Cref{fig:pps21}.  For the inductive step, we consider \Cref{fig:exp1}.  Let $(p,q)$ correspond to the top diagram in \Cref{fig:exp1}, and let $(p',q')$ and $(p'',q'')$ correspond to the diagrams on the lower left and lower right respectively.

\begin{figure}[htbp]
\centering
\includegraphics[width=\textwidth]{./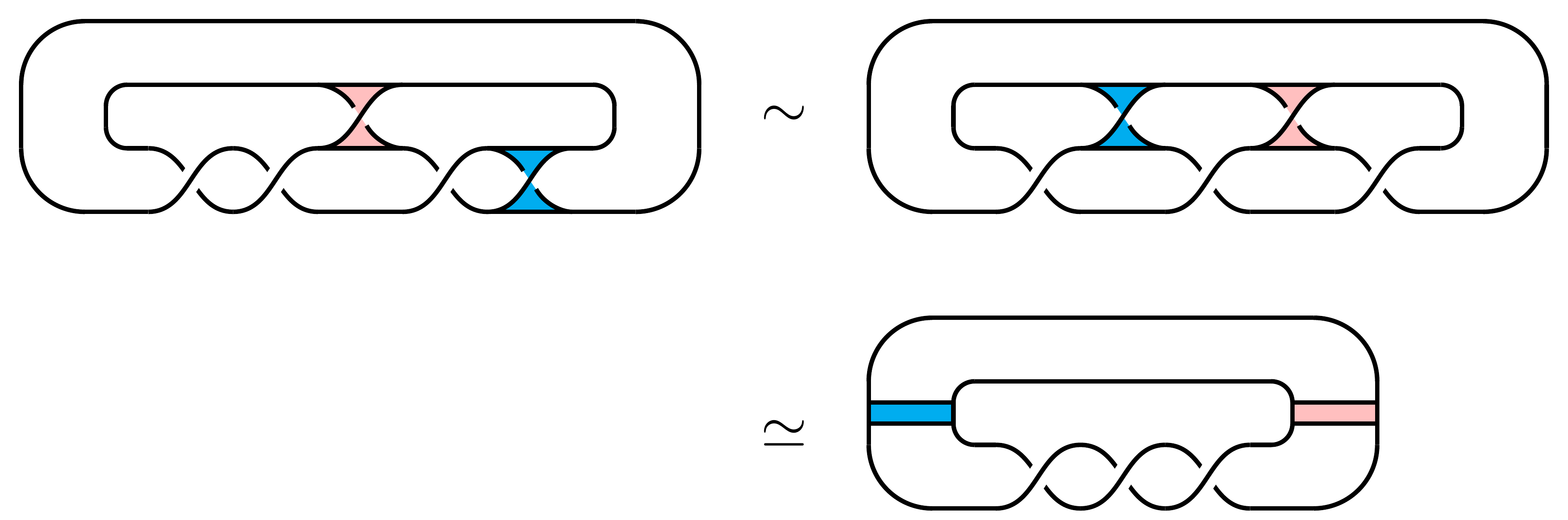}
\begin{narrow}{0.3in}{0.3in}
\caption
{{\bf The proof of \Cref{thm:pps} for $p=2$, $q=1$.} The second diagram is obtained from the first by sliding the blue band over the pink one, and the third is isotopic to the second.}
\label{fig:pps21}
\end{narrow}
\end{figure}

By induction, $F'_{p,q}$ represents the same surface as $F^\delta_{p,q}$.  In particular, the blue bands in the bottom right of the given diagram of $F'_{p,q}$ may be moved by a sequence of band slides to give the blue bands in the top left of the given diagram of $F^\delta_{p,q}$. 

In the case of $(p'',q'')$, we begin with a band slide as shown on the left side of \Cref{fig:ppsslides}.
In both cases, we then move any blue bands inherited from the $(p,q)$ diagram, using the fact that all three diagrams in \Cref{fig:exp1} become isotopic after applying the pink band move.  Finally we isotope the pink band as shown on the right side of \Cref{fig:ppsslides}.

\begin{figure}[htbp]
\centering
\includegraphics[width=\textwidth]{./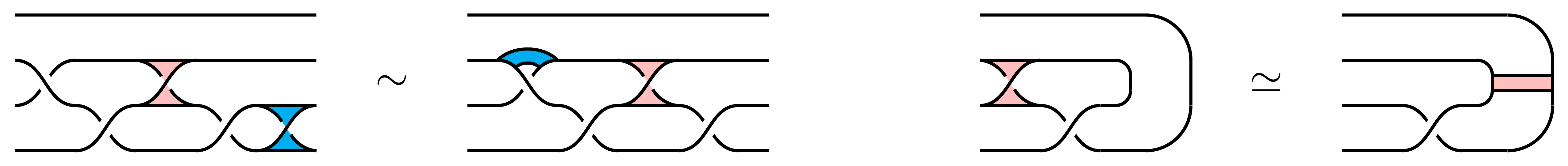}
\begin{narrow}{0.3in}{0.3in}
\caption
{{\bf A band slide and an isotopy.} }
\label{fig:ppsslides}
\end{narrow}
\end{figure}

We have now established that the knot with bands $F'_{p,q}$ represents the surface $F^\delta_{p,q}$ in the 4-ball.  Applying all the blue band moves first, we see the slice surface $\Delta_{p,q}$ as a sublevel surface, as required.
\end{proof}

\begin{proof}[Proof of \Cref{thm:Kh13}]
Let $p\ge 3$.  This proof is based on the diagram in \Cref{fig:Kh13}.  The reader may verify that performing the red band move yields the two-component unlink, while performing the blue band move converts the link to $K_{2,1}$.  We will see that this diagram may be interpreted as the middle frame of a movie exhibiting $\Delta_{p,1}$ as a sublevel set of $F_{-4}$, if $p$ is odd, or of the boundary sum $\Delta_{2,1}\natural P_-$ if $p$ is even.

\begin{figure}[htbp]
\centering
\includegraphics[width=10cm]{./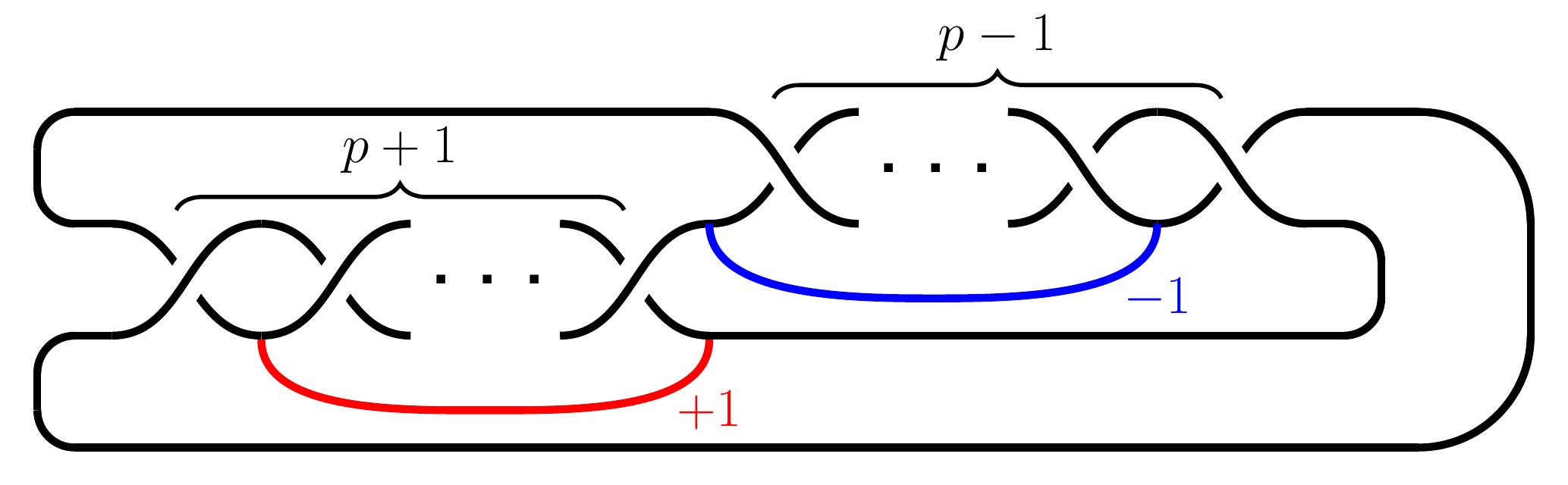}
\begin{narrow}{0.3in}{0.3in}
\caption
{{\bf The middle frame of a movie.} Here we use arcs to represent bands; the label beside each arc is the signed number of crossings in the band.  We may think of the $(+1)$-labelled red band move as decreasing the radial distance function and the blue one as increasing it.}
\label{fig:Kh13}
\end{narrow}
\end{figure}

\Cref{fig:Kh13isos} shows a diagram consisting of the link $K_{2,1}=S(4,1)$ together with two bands; after performing the indicated isotopies and the blue band move, this is seen to contain $\Delta_{p,1}$ as a sublevel surface.  It remains to simplify this diagram.  The first diagram in \Cref{fig:Kh13isos} shows two nested bands attached near the ends of a twist region.  There are $p$ crossings between the ends of one of them, and $p+2$ between the ends of the other.

\begin{figure}[htbp]
\centering
\includegraphics[width=\textwidth]{./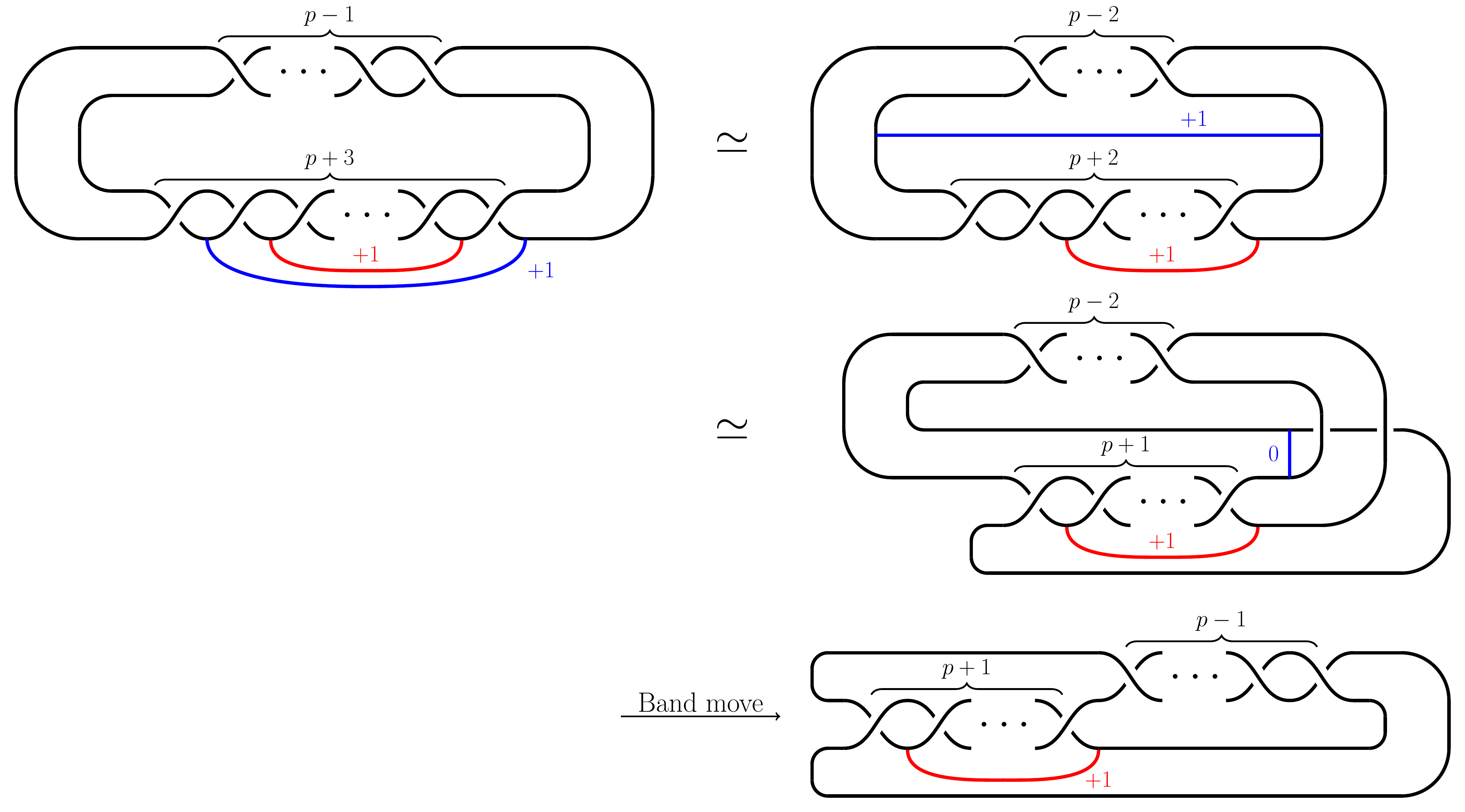}
\begin{narrow}{0.3in}{0.3in}
\caption
{{\bf A surface bounded by $K_{2,1}$.} We see that this contains $\Delta_{p,1}$ as a sublevel surface.}
\label{fig:Kh13isos}
\end{narrow}
\end{figure}

Using a band swim of the form \includegraphics[height=1cm]{./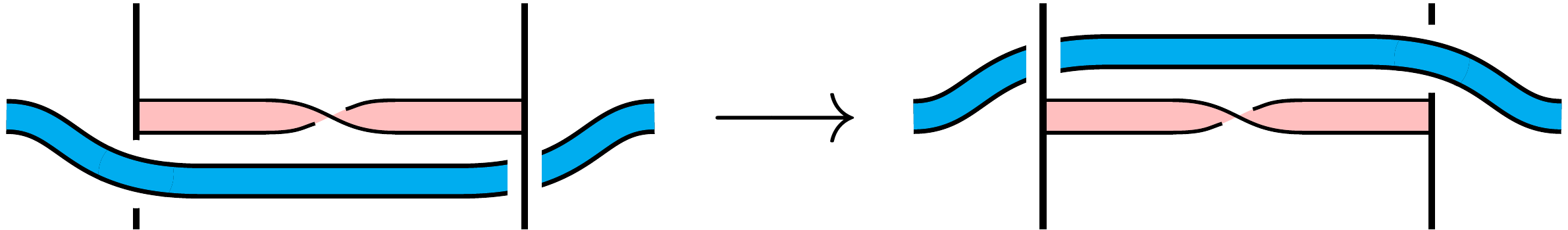}, we can move the blue band inside the red one, so that now one of the pair has $p$ crossings between its ends and the other has $p-2$ crossings.  Iterate until we finish up with one of the two diagrams shown in \Cref{fig:Khfinal};
thus we have realised $\Delta_{p,1}$ as a sublevel surface of $F_{-4}$ if $p$ is odd and of $\Delta_{2,1}\natural P_-$ is $p$ is even.

\begin{figure}[htbp]
\centering
\includegraphics[width=\textwidth]{./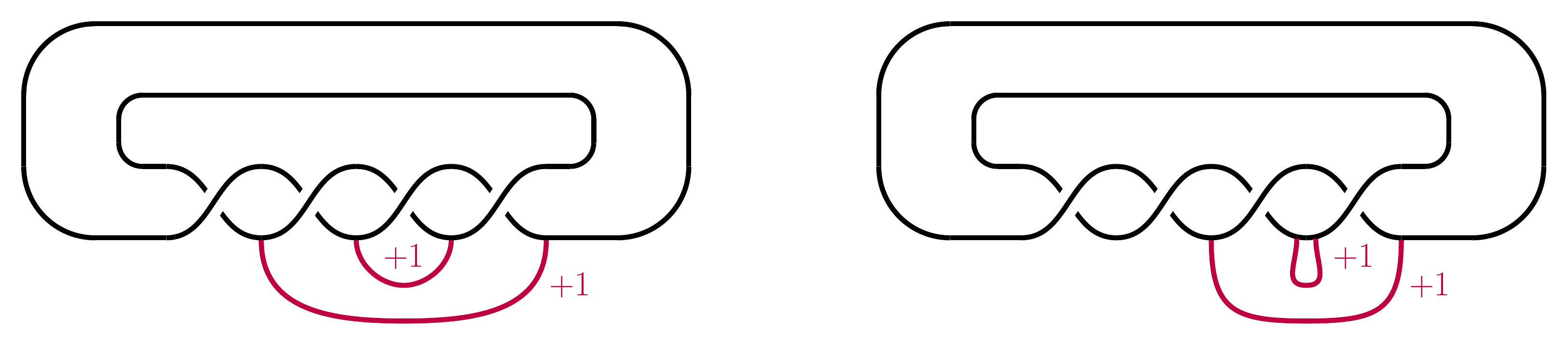}
\begin{narrow}{0.3in}{0.3in}
\caption
{{\bf Two surfaces bounded by $K_{2,1}$.} The surface on the left is $F_{-4}$, and the one on the right is the boundary sum of $\Delta_{2,1}$ and $P_-$ (cf. Figures \ref{fig:Delta21} and \ref{fig:FP}).}
\label{fig:Khfinal}
\end{narrow}
\end{figure}

\end{proof}

\begin{proof}[Proof of \Cref{thm:psquared}] This is very similar to the proof of \Cref{thm:Kh13} so we omit some details.  \Cref{fig:psquared} shows a diagram consisting of the link $K_{p^2,p-1}$ together with two bands; performing the $(+1)$-labelled red band move yields the two-component unlink, while performing the blue band move converts the link to $K_{p,1}$.  The knot with bands that we need is given by performing the blue band move and drawing in its inverse, i.e., the band which undoes the previous band move.  This gives two $(+1)$-labelled bands which are nested in a similar way to those encountered in the proof of \Cref{thm:Kh13}.  Performing $p$ band swims as in that proof results in the diagram of \Cref{fig:psquaredfinal} representing the boundary sum of $\Delta_{p,1}$ and $P_-$.
\end{proof}

\begin{figure}[htbp]
\centering
\includegraphics[width=\textwidth]{./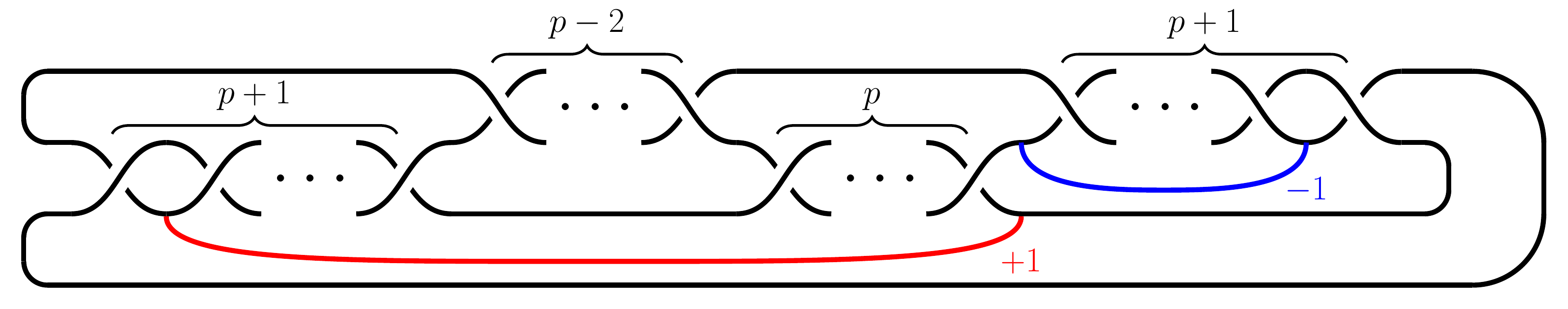}
\begin{narrow}{0.3in}{0.3in}
\caption
{{\bf The middle frame of another movie.}}
\label{fig:psquared}
\end{narrow}
\end{figure}

\begin{figure}[htbp]
\centering
\includegraphics[width=11cm]{./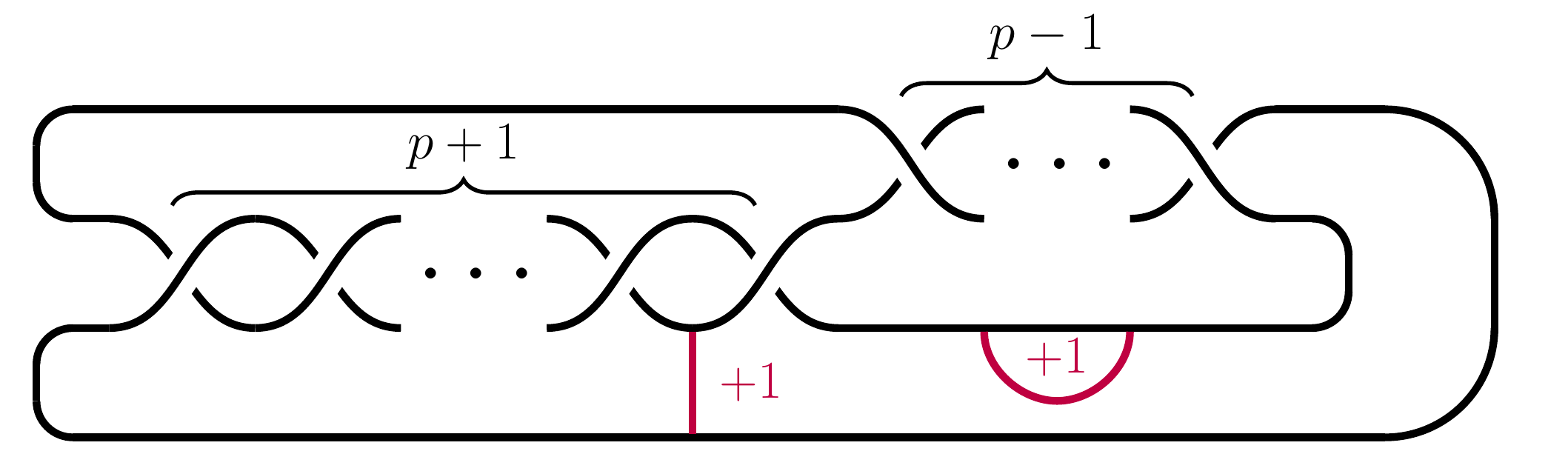}
\begin{narrow}{0.3in}{0.3in}
\caption
{{\bf The surface $\Delta_{p,1}\natural P_-$.}}
\label{fig:psquaredfinal}
\end{narrow}
\end{figure}

\begin{proof}[Proof of \Cref{thm:CP2bar}]
The usual recursive definition of the Fibonacci numbers easily implies
$$F(2n+2)=3F(2n)-F(2n-2),$$
which by an easy induction gives
$$\frac{F(2n+2)}{F(2n)}=[3^n].$$
Now using \Cref{fig:planardual} we have
$$\frac{F(2n+2)}{F(2n+2)-F(2n)}=[2,3^{n-1},2],$$
and then \Cref{lem:cf} yields
$$\frac{F(2n+2)^2}{F(2n+2)F(2n)-1}=[3^{n-1},5,3^{n-1},2].$$

Consider now the surface embedded in $B^4$ depicted in \Cref{fig:CP2bar}.  We see that the $(-1)$-labelled blue band move converts this into a diagram of $\Delta_{F(2n+2),F(2n)}$, which is thus a sublevel surface.  Also observe that the boundary link is an unknot: to see this perform a sequence of $2n$ isotopies of the form \includegraphics[width=4cm]{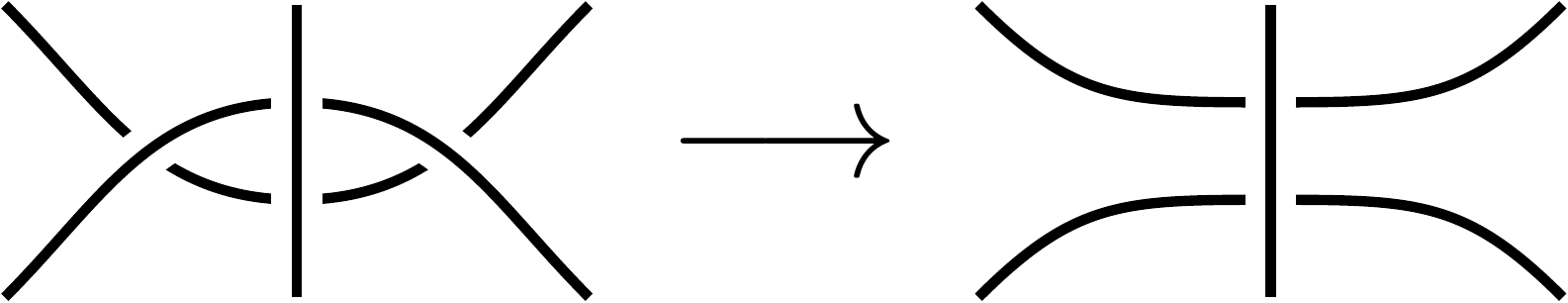}, followed by a Reidemeister II and a Reidemeister I move.  (The first of these isotopies involves the four crossings to the immediate left of the red band; in general these $2n$ isotopies can be located by noting that they always involve the single nonalternating crossing, which in the initial diagram is the one to the left of the blue band.) The proof is then completed by the sequence of handleslides shown in \Cref{fig:CP2barslides}.
\end{proof}

\begin{figure}[htbp]
\centering
\includegraphics[width=\textwidth]{./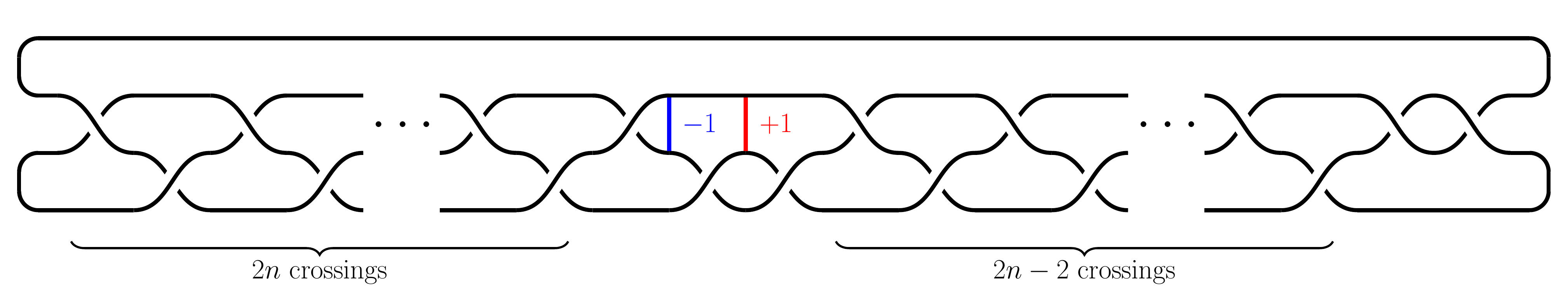}
\begin{narrow}{0.3in}{0.3in}
\caption
{{\bf A surface bounded by the unknot.}}
\label{fig:CP2bar}
\end{narrow}
\end{figure}

\begin{figure}[htbp]
\centering
\includegraphics[width=\textwidth]{./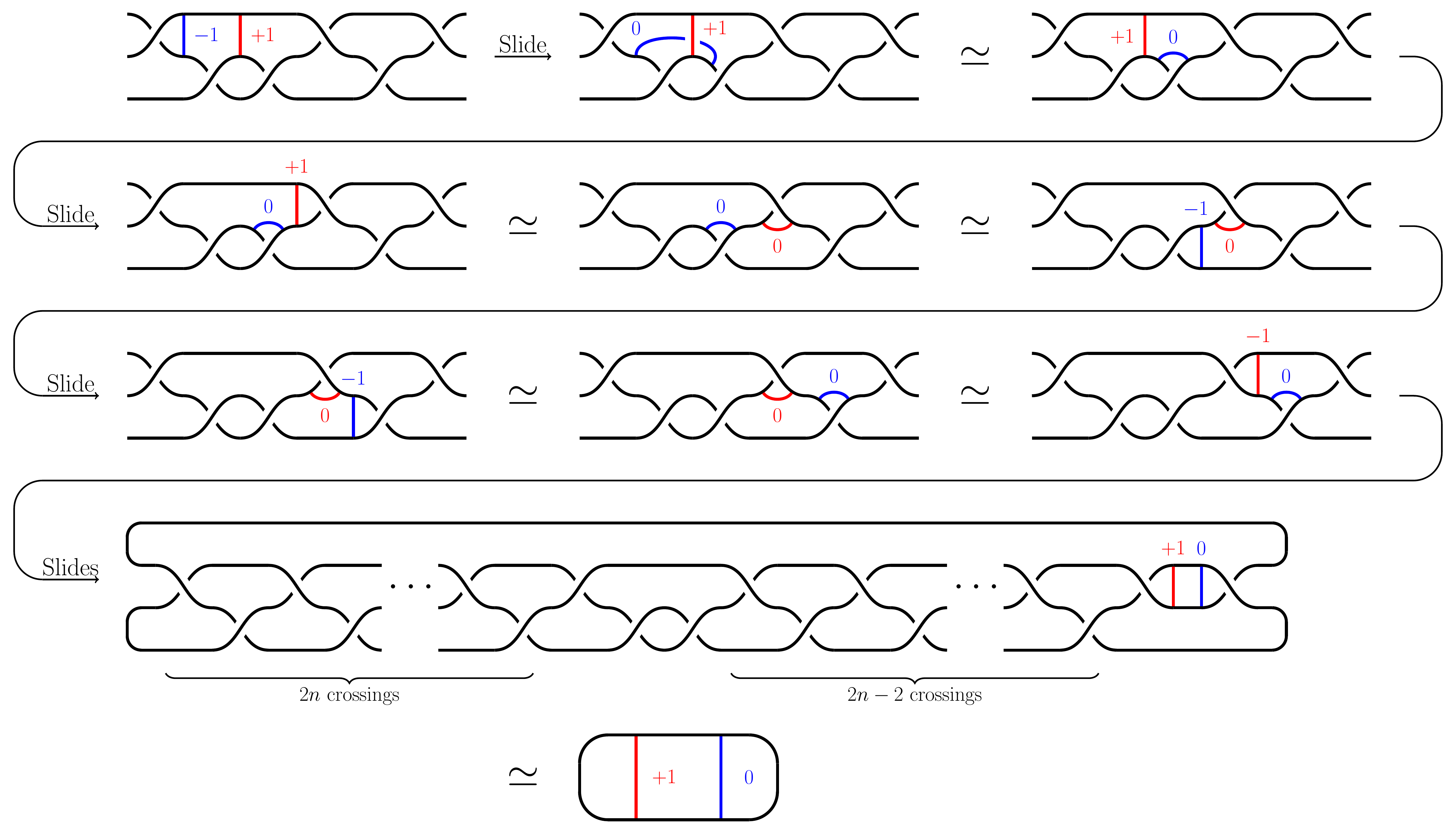}
\begin{narrow}{0.3in}{0.3in}
\caption
{{\bf Simplifying a surface bounded by the unknot.}}
\label{fig:CP2barslides}
\end{narrow}
\end{figure}

\begin{remark}
Applying almost the same proof for odd Fibonacci numbers realises $\Delta_{F(2n+1),F(2n-1)}$ as a sublevel surface of $P_+$, and hence a smooth embedding of $B_{F(2n+1),F(2n-1)}$ in $\CP^2$.  It is well-known that in fact an embedding of $B_{F(2n+1),F(2n-1)}$ in $\CP^2$ arises from singularity theory since $(1,F(2n-1),F(2n+1))$ is a Markov triple, cf. \cite{aigner,es,hp}.
\end{remark}


\section{Simple embeddings}
\label{sec:simple}

In \cite{kho}, Khodorovskiy called an embedding $B_{p,q}\hookrightarrow Z$ {\em simple} if the corresponding rational blow-up $X=Z\setminus B\,\cup_{Y_{p,q}} C_{p,q}$ may be obtained from $Z$ by a sequence of ordinary blow-ups, or in other words if $X=Z\#k\,\overline{\CP^2}$.  She pointed out in that paper that the embeddings in \Cref{thm:Kh12}, and those in \Cref{thm:Kh13} with odd $p$, are simple, and Park-Park-Shin showed that the embeddings  described in \Cref{thm:pps} are also simple.  One could extend the notion of simple to embeddings of the form $B_{p,q}\hookrightarrow B_{p',q'}\# \overline{\CP^2}$ by saying that such an embedding is simple if the resulting rational blow-up of $B_{p,q}$ has the same effect as rationally blowing up $B_{p',q'}$, together with a sequence of ordinary blow-ups.  With this terminology the proof of \cite[Corollary 5.1]{kho} applies to show that the embeddings in \Cref{thm:Kh13} are all simple.

Given a sublevel surface $\Delta_{p,q}$ of a properly embedded surface $F$ in the 4-ball, the {\em equivariant rational blow-up} of $\Delta_{p,q}$ is the surface obtained by replacing $\Delta_{p,q}$ by the pushed-in black surface of its boundary link.  Examples are shown in Figures \ref{fig:PPSsimple}, \ref{fig:Kh13simple}, \ref{fig:psquaredsimple}, and \ref{fig:CP2barsimple}.
We say a sublevel surface $\Delta_{p,q}$ of a properly embedded surface $F$ in the 4-ball is simple if the equivariant rational blow-up yields $F\natural kP_-$ for some $k\in\nn$, or if $F=\Delta_{p',q'}$ and the equivariant rational blow-up yields $F_{p',q'}\natural kP_-$, where $F_{p',q'}$ is the pushed-in black surface of $K_{p',q'}$.

\begin{proposition}  
\label{prop:simple}
All of the sublevel surfaces $\Delta_{p,q}\hookrightarrow F$ exhibited in the previous section are simple.
\end{proposition}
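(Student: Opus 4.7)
The plan is to handle the four families of sublevel surfaces from \Cref{sec:proofs} in turn; in each case I would perform the equivariant rational blow-up directly on the knot-with-bands diagram used to exhibit the embedding and simplify it using the same band slides, band swims and planar isotopies that appeared in the original construction. Taking double branched covers at the end will yield simplicity in the sense of \cite{kho}.

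For the embeddings of Theorems \ref{thm:Kh12} and \ref{thm:pps} I would start from the diagram $F'_{p,q}$ and replace the pink vertical band of $\Delta_{p,q}$ with the pushed-in black surface occupying the corresponding side of the two-bridge diagram. An induction on the Stern-Brocot tree $\W_2$ then deforms the result into $F^\delta_{p,q}$ boundary-summed with a determined number of copies of $P_-$: the base case $(p,q)=(2,1)$ follows by a direct calculation analogous to \Cref{fig:pps21}, and in the inductive step the recursion of \Cref{fig:exp1} prescribes exactly how each new blue band interacts with the expanded twist region. The count of $P_-$ summands is read off from the number of blue bands, which will match the picture displayed in \Cref{fig:PPSsimple}.

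The remaining three theorems are handled by the same technique applied to their respective middle-frame diagrams. For \Cref{thm:Kh13} I would replace the sublevel $\Delta_{p,1}$ in \Cref{fig:Kh13} by the pushed-in black surface of $K_{p,1}$ and run the band swims of the original proof in reverse, producing $F_{-4}\natural kP_-$ when $p$ is odd and $(\Delta_{2,1}\natural P_-)\natural kP_-$ when $p$ is even, as depicted in \Cref{fig:Kh13simple}. \Cref{thm:psquared} proceeds similarly from \Cref{fig:psquared}, with the end result shown in \Cref{fig:psquaredsimple}. For \Cref{thm:CP2bar} I would replace $\Delta_{F(2n+2),F(2n)}$ in \Cref{fig:CP2bar} by the $[3^{n-1},5,3^{n-1},2]$ black surface and then unwind using the handleslides of \Cref{fig:CP2barslides} to reach $P_-\natural kP_-$, as in \Cref{fig:CP2barsimple}.

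The main technical obstacle will be confirming that every stray Möbius-band summand produced during the simplification is genuinely $P_-$ and not $P_+$. This follows from the sign convention for the standard alternating diagram of $K_{p,q}$: the black surface is chosen so that its double branched cover is the negative-definite plumbing $C_{p,q}$, so that each crosscap that splits off during the reduction carries a negative half-twist. Once this sign check is in hand, each case reduces to the routine verification that the indicated diagrammatic simplification really does terminate at the reference figure, and the proposition then follows on passing to double branched covers, using that the branched double cover of $P_-$ is $\overline{\CP^2}$.
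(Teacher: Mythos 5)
Your proposal follows essentially the same route as the paper: the paper's proof is precisely a case-by-case diagrammatic verification, performing the equivariant rational blow-up on each knot-with-bands diagram and simplifying via band slides, swims and isotopies to the form $F\natural kP_-$ (or $F_{p',q'}\natural kP_-$), illustrated by the same four figures you cite. Your additional remarks (the induction on $\W_2$ for the Park--Park--Shin case, reversing the band swims for the others, and the sign check on the crosscaps) are just a fleshing-out of what the paper leaves as "straightforward".
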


\begin{proof}
The proofs are fairly straightforward and it suffices to illustrate with examples.  For the case of Theorems \ref{thm:Kh12} and \ref{thm:pps}, see \Cref{fig:PPSsimple}.  For \Cref{thm:Kh13}, see \Cref{fig:Kh13simple}.  For \Cref{thm:psquared}, see \Cref{fig:psquaredsimple}. Finally for \Cref{thm:CP2bar}, see \Cref{fig:CP2barsimple}.

\end{proof}

\begin{remark}  It is very interesting to note that Finashin \cite{fin} has given examples of the use of equivariant rational blow-down in the construction of  exotic smooth manifolds homeomorphic to $\CP^2\#5\overline{\CP^2}$.
\end{remark}

\begin{figure}[htbp]
\centering
\includegraphics[width=\textwidth]{./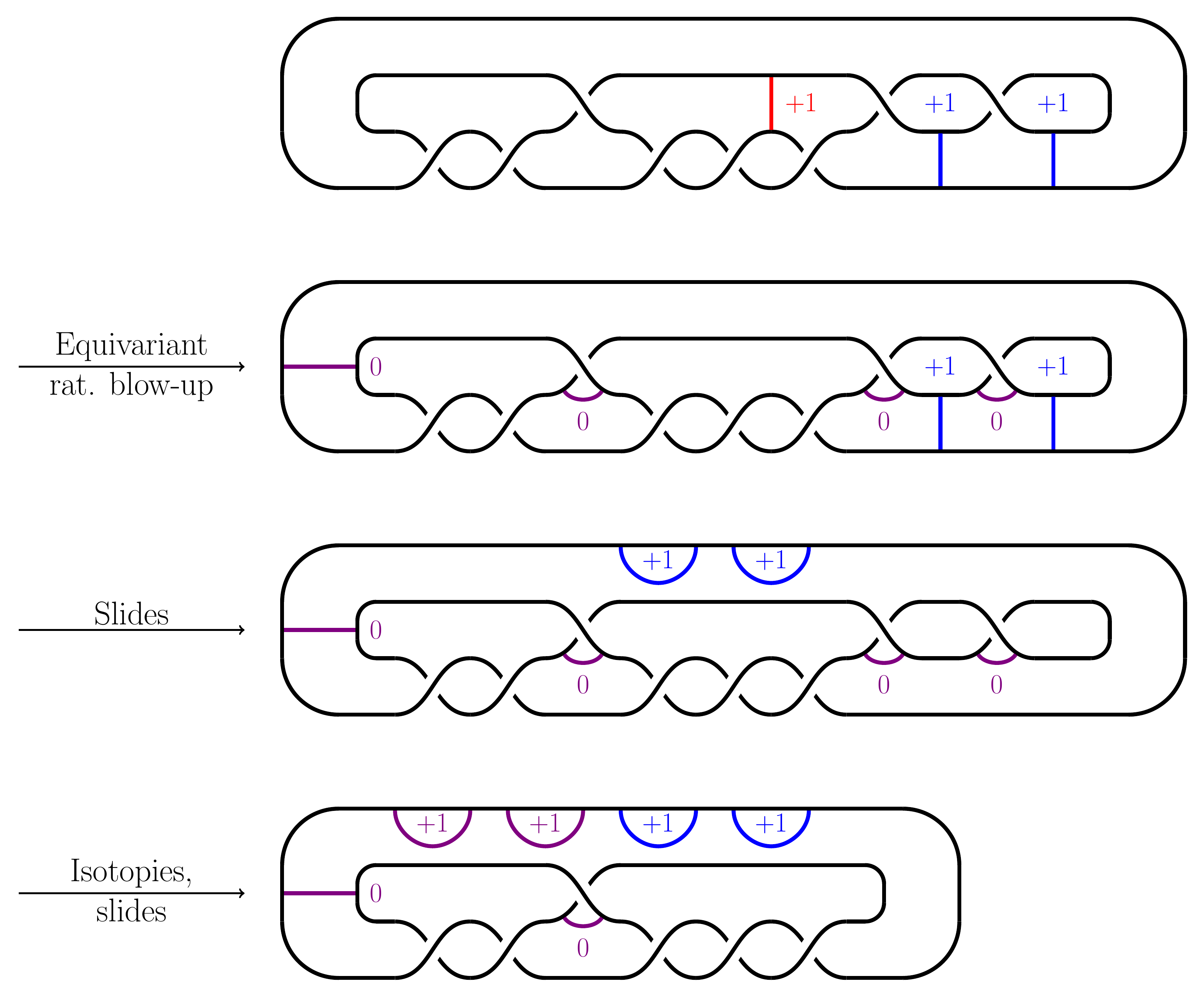}
\begin{narrow}{0.3in}{0.3in}
\caption
{{\bf Simplicity of the sublevel surface $\Delta_{8,3}\hookrightarrow F^\delta_{8,3}$.}  The equivariant rational blow-up is achieved by replacing the band for $\Delta_{8,3}$ with those for the black surface of $K_{8,3}$.  To go from the second diagram to the third, slide the blue bands to the left, and then note that a $P_-$ boundary summand can be moved anywhere in the diagram; to move it past another band, slide the other band over the trivial $(+1)$-labelled band twice.}
\label{fig:PPSsimple}
\end{narrow}
\end{figure}

\begin{figure}[htbp]
\centering
\includegraphics[width=\textwidth]{./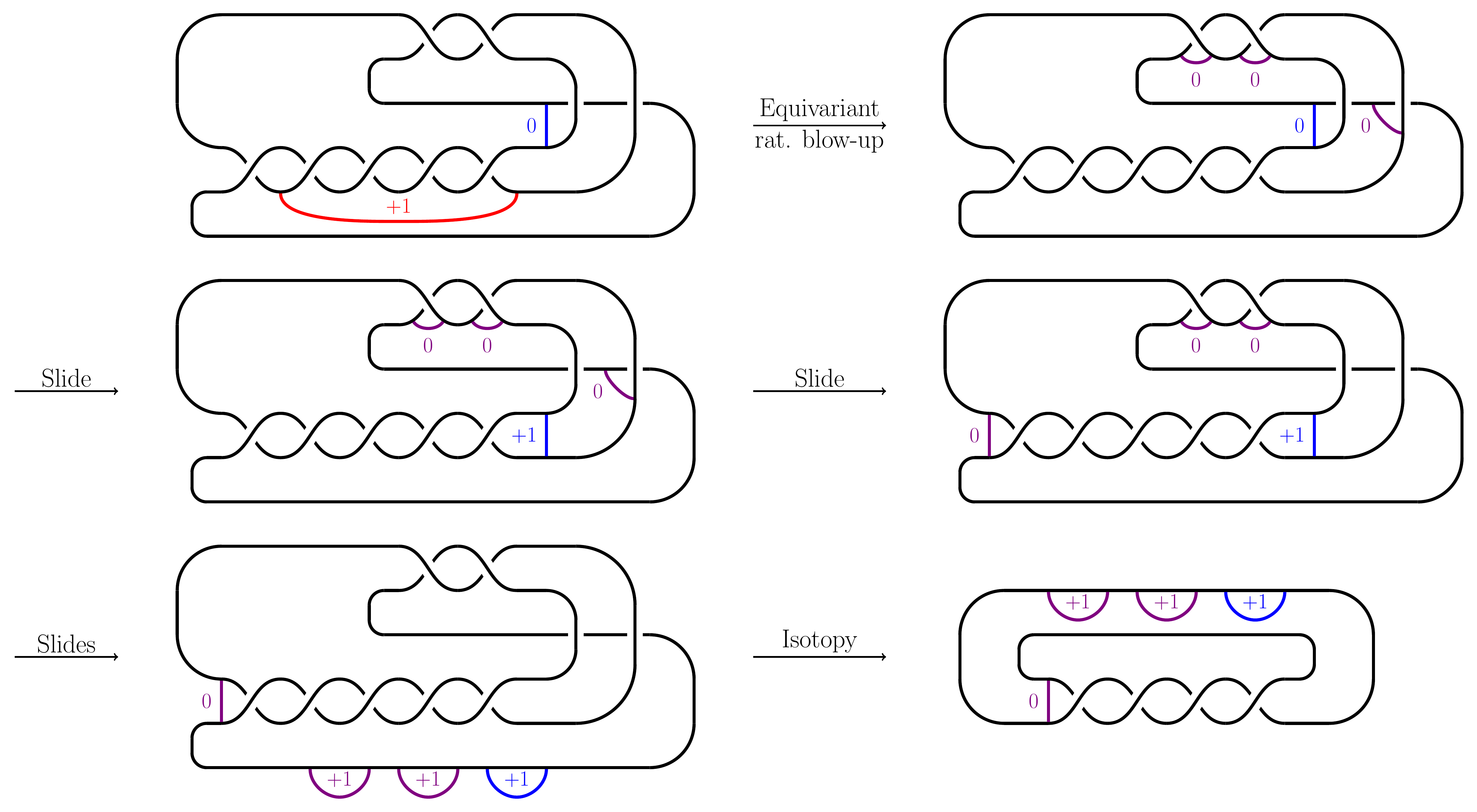}
\begin{narrow}{0.3in}{0.3in}
\caption
{{\bf Simplicity of the sublevel surface $\Delta_{4,1}\hookrightarrow \Delta_{2,1}\natural P_-$.}  }
\label{fig:Kh13simple}
\end{narrow}
\end{figure}

\begin{figure}[htbp]
\centering
\includegraphics[width=\textwidth]{./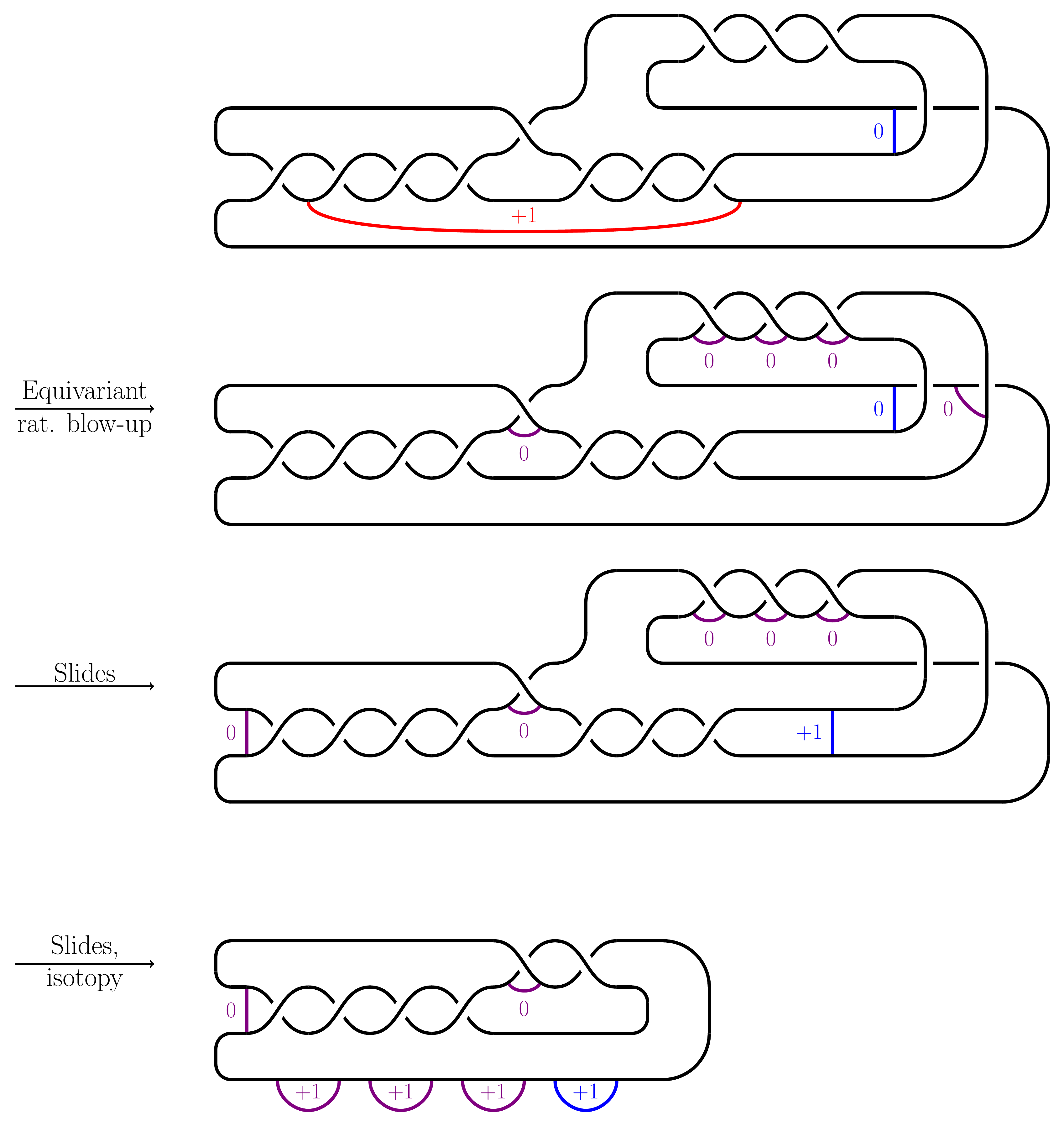}
\begin{narrow}{0.3in}{0.3in}
\caption
{{\bf Simplicity of the sublevel surface $\Delta_{9,1}\hookrightarrow \Delta_{3,1}\natural P_-$.}  }
\label{fig:psquaredsimple}
\end{narrow}
\end{figure}

\begin{figure}[htbp]
\centering
\includegraphics[width=\textwidth]{./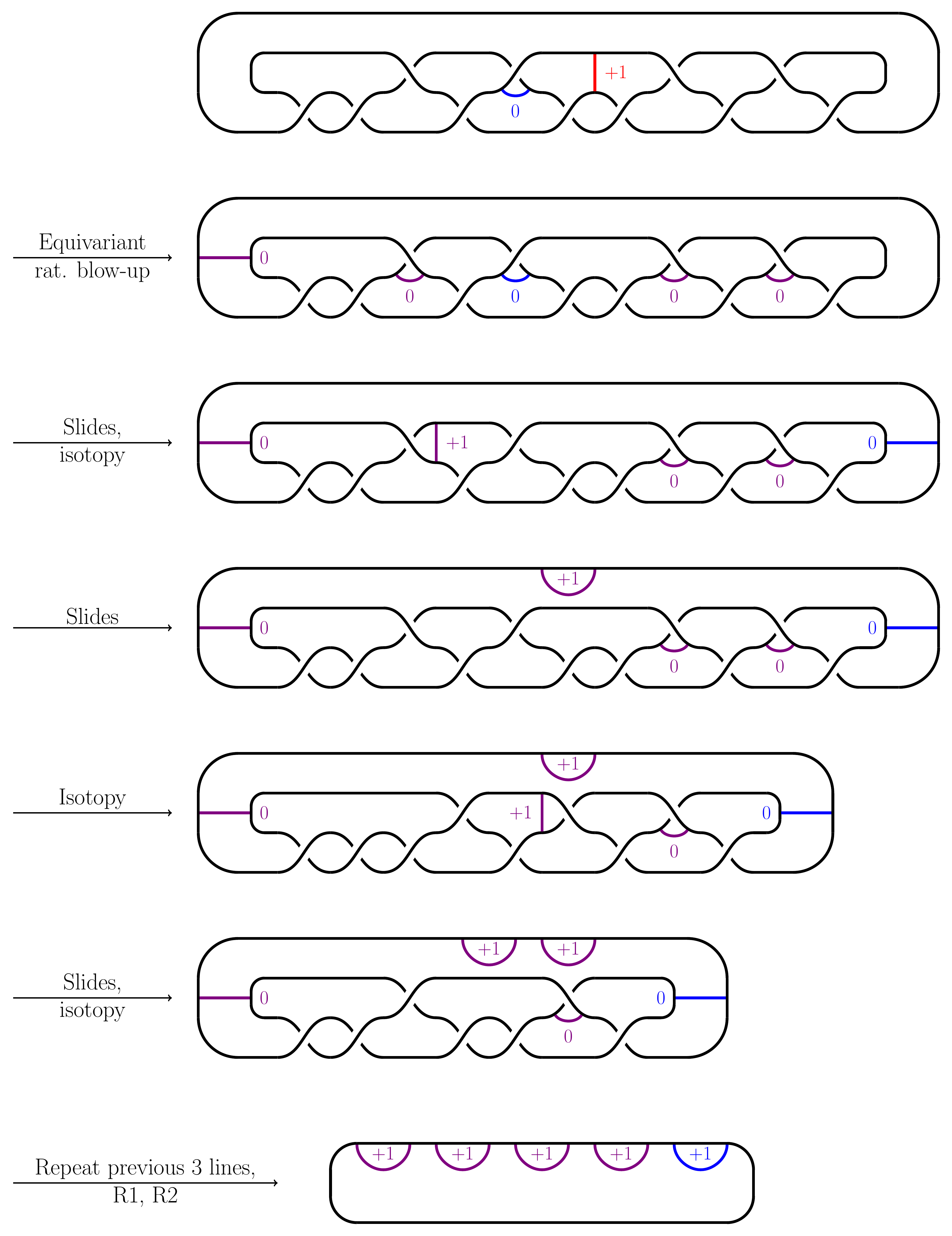}
\begin{narrow}{0.3in}{0.3in}
\caption
{{\bf Simplicity of the sublevel surface $\Delta_{8,3}\hookrightarrow P_-$.}  The sequence of slides and isotopies from the 3rd diagram to the 6th give an inductive step to go from $n$ to $n-1$.}
\label{fig:CP2barsimple}
\end{narrow}
\end{figure}


\clearpage

\bibliographystyle{amsplain}
\bibliography{balls}

\providecommand{\bysame}{\leavevmode\hbox to3em{\hrulefill}\thinspace}
\providecommand{\MR}{\relax\ifhmode\unskip\space\fi MR }
\providecommand{\MRhref}[2]{%
  \href{http://www.ams.org/mathscinet-getitem?mr=#1}{#2}
}
\providecommand{\href}[2]{#2}
\begin{thebibliography}{10}

\bibitem{aigner}
Martin Aigner, \emph{Markov's theorem and 100 years of the uniqueness
  conjecture}, Springer, Cham, 2013.

\bibitem{brocot}
Achille Brocot, \emph{Calcul des rouages par approximation, nouvelle
  m{\'e}thode}, Revue Chonom{\'e}trique \textbf{3} (1861), 186--194.

\bibitem{ch}
Andrew~J. Casson and John~L. Harer, \emph{Some homology lens spaces which bound
  rational homology balls}, Pacific J. Math. \textbf{96} (1981), no.~1, 23--36.

\bibitem{es}
Jonathan~David Evans and Ivan Smith, \emph{Markov numbers and {L}agrangian cell
  complexes in the complex projective plane}, Geom. Topol. \textbf{22} (2018),
  no.~2, 1143--1180.

\bibitem{fin}
Sergey Finashin, \emph{Exotic embeddings of {$\#6\Bbb R{\rm P}^2$} in the
  4-sphere}, Proceedings of {G}{\"o}kova {G}eometry-{T}opology {C}onference
  2008, G{\"o}kova Geometry/Topology Conference (GGT), G{\"o}kova, 2009,
  pp.~151--169.

\bibitem{fs}
Ronald Fintushel and Ronald~J. Stern, \emph{Rational blowdowns of smooth
  {$4$}-manifolds}, J. Differential Geom. \textbf{46} (1997), no.~2, 181--235.

\bibitem{gkp}
Ronald~L. Graham, Donald~E. Knuth, and Oren Patashnik, \emph{Concrete
  mathematics}, second ed., Addison-Wesley Publishing Company, Reading, MA,
  1994.

\bibitem{hp}
Paul Hacking and Yuri Prokhorov, \emph{Smoothable del {P}ezzo surfaces with
  quotient singularities}, Compos. Math. \textbf{146} (2010), no.~1, 169--192.

\bibitem{hnk}
F.~Hirzebruch, W.~D. Neumann, and S.~S. Koh, \emph{Differentiable manifolds and
  quadratic forms}, Marcel Dekker, Inc., New York, 1971, Lecture Notes in Pure
  and Applied Mathematics, Vol. 4.

\bibitem{kk}
Cherry Kearton and Vitaliy Kurlin, \emph{All 2-dimensional links in 4-space
  live inside a universal 3-dimensional polyhedron}, Algebr. Geom. Topol.
  \textbf{8} (2008), no.~3, 1223--1247.

\bibitem{kho}
Tatyana Khodorovskiy, \emph{Smooth embeddings of rational homology balls},
  Topology Appl. \textbf{161} (2014), 386--396.

\bibitem{lec}
Ana~G. Lecuona, \emph{On the slice-ribbon conjecture for {M}ontesinos knots},
  Trans. Amer. Math. Soc. \textbf{364} (2012), no.~1, 233--285.

\bibitem{lm}
Yank\i\ Lekili and Maksim Maydanskiy, \emph{The symplectic topology of some
  rational homology balls}, Comment. Math. Helv. \textbf{89} (2014), no.~3,
  571--596.

\bibitem{lisca1}
Paolo Lisca, \emph{Lens spaces, rational balls and the ribbon conjecture},
  Geom. Topol. \textbf{11} (2007), 429--472.

\bibitem{lisca2}
\bysame, \emph{Sums of lens spaces bounding rational balls}, Algebr. Geom.
  Topol. \textbf{7} (2007), 2141--2164.

\bibitem{lisca3}
\bysame, \emph{On symplectic fillings of lens spaces}, Trans. Amer. Math. Soc.
  \textbf{360} (2008), no.~2, 765--799.

\bibitem{pps}
Heesang Park, Jongil Park, and Dongsoo Shin, \emph{Smoothly embedded rational
  homology balls}, J. Korean Math. Soc. \textbf{53} (2016), no.~6, 1293--1308.

\bibitem{park}
Jongil Park, \emph{Simply connected symplectic 4-manifolds with {$b^+_2=1$} and
  {$c^2_1=2$}}, Invent. Math. \textbf{159} (2005), no.~3, 657--667.

\bibitem{pss}
Jongil Park, Andr{{\'a}}s~I. Stipsicz, and Zolt{{\'a}}n Szab{{\'o}},
  \emph{Exotic smooth structures on {$\Bbb{CP}^2\#5\overline{\Bbb{CP}^2}$}},
  Math. Res. Lett. \textbf{12} (2005), no.~5-6, 701--712.

\bibitem{stern}
Moritz Stern, \emph{Ueber eine zahlentheoretische {F}unktion}, J. Reine Angew.
  Math. \textbf{55} (1858), 193--220.

\bibitem{ss}
Andr{{\'a}}s~I. Stipsicz and Zolt{{\'a}}n Szab{{\'o}}, \emph{An exotic smooth
  structure on {$\Bbb C\Bbb P^2\#6\overline{\Bbb C\Bbb P^2}$}}, Geom. Topol.
  \textbf{9} (2005), 813--832 (electronic).

\bibitem{frank}
Frank~J. Swenton, \emph{On a calculus for 2-knots and surfaces in 4-space}, J.
  Knot Theory Ramifications \textbf{10} (2001), no.~8, 1133--1141.

\bibitem{sym}
Margaret Symington, \emph{Generalized symplectic rational blowdowns}, Algebr.
  Geom. Topol. \textbf{1} (2001), 503--518.

\bibitem{wahl}
Jonathan~M. Wahl, \emph{Elliptic deformations of minimally elliptic
  singularities}, Math. Ann. \textbf{253} (1980), no.~3, 241--262.

\end{thebibliography}

\end{document}